\documentclass[12pt,reqno]{amsart}

\usepackage{graphicx}
\graphicspath{ {./images/} }
\usepackage{bm}

\usepackage{color}

\usepackage{amssymb}

\usepackage{amsfonts}

\usepackage{amsmath}

\usepackage[all]{xy}

\newcommand{\PP}{\mathbb{P}}
\newtheorem{theorem}{Theorem}[section]

\newtheorem{corollary}[theorem]{Corollary}

\newtheorem{lemma}[theorem]{Lemma}

\newtheorem{proposition}[theorem]{Proposition}

\newtheorem{Definition}[theorem]{Definition}

\newtheorem{Example}[theorem]{Example}

\newtheorem{Remark}[theorem]{Remark}

\newcommand{\arccosh}{\operatorname{arcosh}}
\newenvironment{remark}{\begin{Remark}\begin{em}}{\end{em}\end{Remark}}

\DeclareMathOperator{\tr}{tr}

\address{Sejong Kim \\ Department of Mathematics, Chungbuk National University, Cheongju 28644, Korea}
\email{skim@chungbuk.ac.kr}

\address{Vatsalkumar N. Mer \\ Institute for Industrial and Applied Mathematics, Chungbuk National University, Cheongju 28644, Korea}
\email{vnm232657@gmail.com}

\begin{document}

\title[A multivariable mean equation arising from the spectral geometric mean]{A multivariable mean equation arising from the spectral geometric mean}

\author{Sejong Kim and Vatsalkumar N. Mer}

\date{}

\begin{abstract}
In the 1980s, Kubo and Ando introduced operator means on $\mathbb{P}$, the open convex cone of positive definite operators. One significant example is the weighted geometric mean
\begin{displaymath}
A \sharp_{t} B = A^{1/2} (A^{-1/2} B A^{-1/2})^{t} A^{1/2}, \qquad A,B \in \mathbb{P}.
\end{displaymath}
The Karcher mean serves as a natural multivariable extension of this mean by minimizing the sum of squared Riemannian trace distances of positive definite matrices. It coincides a unique positive definite solution to the Karcher equation, which allows us to define the Karcher mean on $\mathbb{P}$.
The weighted spectral geometric mean is defined as another geometric mean of two positive definite operators as follows:
\begin{displaymath}
A \natural_t B = (A^{-1} \sharp B)^{t} A (A^{-1} \sharp B)^{t},
\end{displaymath}
where $A \sharp B = A \sharp_{1/2} B$.
In this paper, we make an initial attempt to formulate a multivariable
spectral geometric mean through a nonlinear equation.
In the two-variable case, the unique positive definite solution of this equation is precisely
the spectral geometric mean. However, in the multi-variable case, the
equation need not have a unique solution. We study properties of its
solutions and compare them with other least squares means of positive definite matrices.
Recently, a new theory of alternative means for positive definite operators has been developed, which includes the spectral geometric mean and the Wasserstein mean. We also consider
multivariable equation arising from the alternative means.

\vspace{5mm}

\noindent {\bf Mathematics Subject Classification} (2020): primary  47A63, 47A64, 15B48 , secondary 53C20.

\noindent {\bf Keywords}: Positive definite operator, spectral geometric mean, Wasserstein mean, alternative mean
\end{abstract}

\maketitle

\section{Introduction}
% Averaging multiple data sources is one of the most basic and essential methods in data science.
%Positive definite matrices play a crucial role in machine learning, computer vision, and image processing, offering a strong foundation for data representation.

Let $B(\mathcal{H})$ be the Banach space of all bounded linear operators on a Hilbert space $\mathcal{H}$. Let $S(\mathcal{H}) \subset B(\mathcal{H})$ be the closed subspace of all self-adjoint linear operators. We denote as $\mathbb{P} \subset S(\mathcal{H})$ the open convex cone of all invertible positive definite operators. For $X, Y \in S(\mathcal{H})$, we define as $X \leq Y$ if $Y - X$ is positive semi-definite, and $X < Y$ if $Y - X$ is positive definite. It is a partial order, known as the L{\"o}wner order. In the finite-dimensional setting $\mathcal{H} = \mathbb{C}^{n}$, we denote as $\mathbb{H}_n$ and $\mathbb{P}_n$ the real vector space of all $n \times n$ Hermitian matrices and the open convex cone of all $n \times n$ positive definite matrices respectively.

The concept of the geometric mean of two positive definite matrices is first appeared in a mathematical physics context in the work of Pusz and Woronowicz \cite{PW}.    For \( A, B \in \mathbb{P} \), their geometric mean is defined as
\[
A \sharp B = A^{1/2} (A^{-1/2} B A^{-1/2})^{1/2} A^{1/2},
\]
which is now commonly referred to as the metric geometric mean. Later, Kubo and Ando \cite{ka} developed a systematic and axiomatic theory of operator means for positive definite operators.
A binary operation $\sigma$  on positive definite operators is called \emph{operator mean}
if it satisfies the following conditions:
\begin{enumerate}
\item $A \sigma A =A$ for every $A \in \mathbb{P}$.
\item If $A \leq C$ and $B \leq D$, then $A \sigma B\leq C \sigma D$.
\item $T(A \sigma B) T^* = TAT^*\sigma TBT^*$ for any invertible operator $T$.
\item $\sigma$ is continuous.
\end{enumerate}
One of their fundamental results is to characterize all such operator means in terms of operator monotone functions:
\begin{theorem}\label{Opm}
For every operator mean $\sigma$ on $\mathbb{P}$, there exists an operator monotone function $f: (0,\infty) \rightarrow (0,\infty)$, normalized by $f(1)=1$, such that $A \sigma B = A^{1/2} f(A^{-1/2} B A^{-1/2})A^{1/2}$ for every $A ,B \in \mathbb{P}$.
\end{theorem}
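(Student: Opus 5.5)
The plan follows the classical Kubo--Ando argument: use congruence invariance to reduce everything to the case $A=I$, and then show that the resulting one-variable function is operator monotone.

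\emph{Reduction to $A=I$.} Fix an operator mean $\sigma$. For $A,B\in\mathbb{P}$, apply axiom (3) with the invertible operator $T=A^{-1/2}$. This gives
\[
A^{-1/2}(A\sigma B)A^{-1/2}=I\,\sigma\,(A^{-1/2}BA^{-1/2}),
\]
so
\[
A\sigma B=A^{1/2}\bigl(I\sigma (A^{-1/2}BA^{-1/2})\bigr)A^{1/2}.
\]
It therefore suffices to show that the map $X\mapsto I\sigma X$ on $\mathbb{P}$ equals $f(X)$ for a suitable operator monotone function $f$.

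\emph{Defining $f$.} I first work on scalars. For every unitary $U$, axiom (3) gives $U(I\sigma tI)U^*=I\sigma tI$, so $I\sigma tI$ commutes with all unitaries. It is therefore a scalar multiple of $I$; write $I\sigma tI=f(t)I$. By axiom (1), $f(1)=1$. By axiom (2), $f$ is monotone and positive, and by axiom (4) it is continuous.

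\emph{Showing $I\sigma X=f(X)$.} The same unitary argument shows that $I\sigma X$ commutes with every unitary commuting with $X$. Hence $I\sigma X$ lies in the bicommutant of $X$, that is, in the von Neumann algebra generated by $X$.

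For $X$ with finite spectrum, $X=\sum_i \lambda_i P_i$, I would argue as follows. Using the unitary $U=\sum_i e^{i\theta_i}P_i$, one gets $I\sigma X=\sum_i c_iP_i$. To identify $c_i=f(\lambda_i)$, I would use a compression or splitting argument. The Kubo--Ando route is to prove first the transformer inequality $T^*(A\sigma B)T\le T^*AT\sigma T^*BT$ for arbitrary (non-invertible) $T$. This follows from axiom (3) by an $\varepsilon$-perturbation together with monotonicity and continuity. Applying it with $T=P_i$ and $T=I-P_i$ in both directions yields
\[
P_i(I\sigma X)P_i=P_i\bigl(I\sigma \lambda_i I\bigr)P_i=f(\lambda_i)P_i.
\]
Approximating a general $X$ by operators with finite spectrum and using continuity then gives $I\sigma X=f(X)$.

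\emph{Operator monotonicity.} This is now immediate. If $0<X\le Y$, axiom (2) gives $f(X)=I\sigma X\le I\sigma Y=f(Y)$.

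The main obstacle is the step identifying $I\sigma X$ with $f(X)$ through the projection argument. The difficulty is that axiom (3) is stated only for invertible $T$, so compressions by projections require the approximation argument that establishes the transformer inequality for general $T$. A further subtlety is the continuity mode in axiom (4) needed for the final approximation: Kubo--Ando use downward strong continuity, so I would approximate $X$ from above by a decreasing sequence of finite-spectrum operators. Alternatively, one can invoke Kubo--Ando's correspondence between means and connections, via the Anderson--Duffin parallel sum, and the integral representation; this is the route I would cite if the direct argument became too technical.
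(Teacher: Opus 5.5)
The paper gives no proof of this statement; it is quoted from Kubo--Ando. Your outline is the Kubo--Ando argument itself: reduce to $A=I$ by congruence, place $I\sigma X$ in the bicommutant of $X$, identify it on spectral projections, then approximate. The reduction, the definition of $f$, the unitary step and the final approximation are fine.

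The step that is not in place is the identification $P_i(I\sigma X)P_i=f(\lambda_i)P_i$. Under the axioms as stated, $\sigma$ is defined only on $\mathbb{P}$. So $P_i\sigma\lambda_iP_i$, and with it any transformer inequality for $T=P_i$, has no meaning until you extend $\sigma$ to singular arguments, e.g.\ by $A\sigma B:=\lim_{\varepsilon\downarrow0}(A+\varepsilon I)\sigma(B+\varepsilon I)$, which is a decreasing limit by (2). Moreover, a transformer inequality only bounds compressions from above. Applying it ``with $T=P_i$ and $T=I-P_i$'' therefore cannot by itself give an equality.

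Here is how the step closes. Let $XP=\lambda P$ and $T_\varepsilon=P+\varepsilon(I-P)$, which is invertible. Axioms (3) and (2) give $T_\varepsilon(I\sigma X)T_\varepsilon=T_\varepsilon^2\,\sigma\,(\lambda P+\varepsilon^2(I-P)X(I-P))\le(P+\delta I)\sigma(\lambda P+\delta I)$ with $\delta=\varepsilon^2\max(1,\|X\|)$. Letting $\varepsilon\to0$ yields $P(I\sigma X)P\le P\sigma\lambda P$, and the same argument with $\lambda I$ in place of $X$ gives $f(\lambda)P\le P\sigma\lambda P$. The reverse bounds come from monotonicity of the extension, not from another transformer inequality:
\begin{itemize}
\item $P\sigma\lambda P\le I\sigma X$, since $P\le I$ and $\lambda P\le X$;
\item $P\sigma\lambda P\le I\sigma\lambda I=f(\lambda)I$.
\end{itemize}
Compressing both chains by $P$ gives $f(\lambda)P\le P(I\sigma X)P\le f(\lambda)P$. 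You only need projections here, and they are norm limits of invertibles. A general non-invertible $T$ on an infinite-dimensional space need not be (the unilateral shift is not), so ``for arbitrary $T$'' claims more than your perturbation argument delivers.

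Finally, ``operator monotonicity is immediate'' only yields $f(X)\le f(Y)$ for $X,Y$ acting on $\mathcal{H}$. When $\dim\mathcal{H}=\infty$, which is Kubo--Ando's setting, comparing $X\oplus I\le Y\oplus I$ gives monotonicity on every $\mathbb{P}_n$, hence operator monotonicity. On a fixed $\mathbb{P}_n$ you only get $n$-monotonicity, and the statement as written then fails. For example, on $\mathbb{P}_1=(0,\infty)$ the mean $a\sigma b=\max(a,b)$ satisfies (1)--(4), but $\max(1,x)$ is not operator monotone.
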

We customarily denote this mean by
$
A \sigma_f B := A^{1/2} f\!\left(A^{-1/2} B A^{-1/2}\right) A^{1/2}.
$
An important example in the Kubo–Ando operator means  is the weighted geometric mean of $A$ and $B$ in $\mathbb{P}$ given by
\begin{displaymath}
A \sharp_{t} B = A^{1/2} (A^{-1/2} B A^{-1/2})^{t} A^{1/2}, \ t \in [0,1].
\end{displaymath}
Note that $A \sharp_{1/2} B = A \sharp B$ is a unique solution $X \in \mathbb{P}$ of the Riccati equation $X A^{-1} X = B$. Various approaches to extend the geometric mean from two to multi-variables of positive definite matrices have been developed.

% Ando, Li, and Mathias \cite{ALM} proposed a symmetrization procedure using an inductive approach to define a multivariable generalization of the geometric mean, now known as the ALM geometric mean. Bini, Meini, and Poloni \cite{BMP} introduced another generalization called the BMP geometric mean, which also preserves desirable matrix properties for the geometric mean.
% \subsection{Least squares means}
Moakher \cite{MO} and  Bhatia and Holbrook \cite{BH} suggested extending the geometric mean of positive definite matrices to $m$-points by taking the unique minimizer of the sum of squares of the Riemannian trace distance, called a \emph{least squares mean}. The Riemannian trace distance on $\mathbb{P}_n$ is defined as
\begin{equation}  \label{R_metric}
d_R (A,B): = \parallel \log A^{-1/2} B  A^{-1/2} \parallel_2
\end{equation}
The Riemannian manifold $(\mathbb{P}_n, d_R)$ is an NPC-space (i.e., a complete metric space satisfying the semi-parallelogram
law). The unique geodesic connecting $A$ and $B$ for the Riemannian trace distance $d_R$ is given by $\gamma(t) = A \sharp_{t} B, t \in \mathbb{R}$.
The least squares mean or Karcher mean (barycentre) of $A_1, A_2, \ldots ,A_m \in \mathbb{P}_n$ is defined as
\begin{equation} \label{Karcher-mean}
\Lambda (\omega; A_1, A_2, \ldots ,A_m)  := \underset{X \in \mathbb{P}_n }{\arg\min} \sum\limits_{i=1}^{m} w_i d_R^2(X,A_i),
\end{equation}
where $\omega = (w_1, \ldots, w_m)$ is a positive probability vector: $w_i > 0$ and $\sum\limits_{i=1}^{m} w_i = 1$.
The Karcher mean $\Lambda (\omega ; A_1, A_2, \ldots ,A_m) $ coincides with the unique solution $X \in \mathbb{P}_n$  of the Karcher equation
\begin{equation} \label{Karcher-eq}
\sum_{i=1}^{m} w_{i} \text{log} (X^{-1/2} A_{i} X^{-1/2}) = 0.
\end{equation}

\begin{theorem} \label{T:properties}
Let $\mathbb{A} = (A_{1}, \ldots, A_{m}), \mathbb{B} = (B_{1}, \ldots, B_{m}) \in \mathbb{P}_{n}^{m}$, and let $\omega = (w_{1}, \dots, w_{m})$ be a positive probability vector. Then
\begin{itemize}
\item[(P1)] $\mathrm{ ( Consistency \ with \ scalars)} $ $\Lambda (\omega ; \mathbb{A}) =  A_1^{w_1} A_2^{w_2} \cdots A_m^{w_m}$ if $A_i$'s commute;

\item[(P2)] $\mathrm{( Joint \ homogeneity)}$ $\Lambda (\omega; a_1 A_1, a_2 A_2, \ldots , a_m A_m) = a_1^{w_1} a_2^{w_2} \cdots a_m^{w_m} \Lambda ( \omega; \mathbb{A})$ for any $a_i > 0$;

\item[(P3)] $\mathrm{(Permutation \ invariance)}$ $\Lambda (\omega_{\sigma}; \mathbb{A}_{\sigma}) = \Lambda (\omega; \mathbb{A})$ for any permutation $\sigma$ on $\{ 1, \dots, m \}$, where
\begin{displaymath}
\omega_{\sigma} := (w_{\sigma(1)}, w_{\sigma(2)}, \dots, w_{\sigma(m)}), \
\mathbb{A}_{\sigma} := (A_{\sigma(1)}, A_{\sigma(2)}, \dots, A_{\sigma(m)});
\end{displaymath}

\item[(P4)] $\mathrm{(Monotonicity)}$ $\Lambda (\omega; \mathbb{A}) \leq \Lambda (\omega; \mathbb{B})$ whenever $A_{i} \leq B_{i}$ for all $i = 1, \dots, m$;

\item[(P5)] $\mathrm{(Continuity)}$ $\Lambda (\omega; \cdot): \mathbb{P}_{m}^{n} \to \mathbb{P}_{m}$ is contractive for the Thompson metric:
\begin{displaymath}
d_{T}(\Lambda (\omega; \mathbb{A}), \Lambda (\omega; \mathbb{B})) \leq \sum_{i=1}^{m} w_{i} d_{T}(A_{i}, B_{i}),
\end{displaymath}
where $d_T (A,B): = \Vert \log A^{-1/2} B  A^{-1/2} \Vert$ for an operator norm $\Vert \cdot \Vert$;

\item[(P6)] $\mathrm{(Congruence \ invariance)}$ $\Lambda (\omega; S \mathbb{A} S^{*}) = S \Lambda (\omega; \mathbb{A}) S^{*}$ for any $S \in \mathrm{GL}_{n}$, where
\begin{displaymath}
S \mathbb{A} S^{*} := (S A_{1} S^{*}, S A_{2} S^{*}, \dots, S A_{m} S^{*});
\end{displaymath}

\item[(P7)] $\mathrm{(Joint \ concavity)}$ $\Lambda (\lambda \mathbb{A} + (1 - \lambda) \mathbb{B}) \geq \lambda \Lambda (\mathbb{A}) + (1 - \lambda) \Lambda(\mathbb{B})$ for any $\lambda \in [0,1]$;

\item[(P8)] $\mathrm{(Self-duality)}$ $\Lambda (\omega; \mathbb{A}^{-1}) = \Lambda(\omega; \mathbb{A})^{-1}$;

\item[(P9)] $\mathrm{(Determinantal identity)}$ $\displaystyle \det \Lambda (\omega; \mathbb{A}) = \prod_{i=1}^{m} (\det A_{i})^{w_{i}}$;

\item[(P10)] $\mathrm{(AGH \ weighted \ mean \ inequalities)}$
\begin{displaymath}
\mathcal{H}(\omega; \mathbb{A}) := \left[ \sum_{i=1}^{m} w_{i} A_{i}^{-1} \right]^{-1} \leq \Lambda(\omega; \mathbb{A}) \leq \sum_{i=1}^{m} w_{i} A_{i} =: \mathcal{A}(\omega; \mathbb{A}).
\end{displaymath}
\end{itemize}
\end{theorem}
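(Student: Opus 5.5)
Most of these properties follow from the characterization of $\Lambda(\omega;\mathbb{A})$ as the unique solution of the Karcher equation \eqref{Karcher-eq}. The strategy is to check, for each claimed identity, that the proposed right-hand side solves the transformed Karcher equation, and then conclude by uniqueness.

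\textbf{Algebraic properties.} For (P1), if the $A_i$ commute, then $X=A_1^{w_1}\cdots A_m^{w_m}$ commutes with each $A_i$. Hence $\sum_i w_i\log(X^{-1}A_i)=\sum_i w_i\log A_i-\log X=0$.

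For (P2), put $X' = \prod_i a_i^{w_i}X$. Then $\sum_i w_i\log(a_i X'^{-1/2}\ldots)$ shifts by the scalar $\sum_i w_i\log a_i-\log\prod_i a_i^{w_i}=0$.

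(P3) is immediate, since the Karcher equation is a symmetric sum.

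For (P6), write $Y=SXS^*$. There is a unitary $U$ with $Y^{-1/2}S=U X^{-1/2}$ (polar decomposition). Then $Y^{-1/2}SA_iS^*Y^{-1/2}=U(X^{-1/2}A_iX^{-1/2})U^*$, and log commutes with unitary conjugation.

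For (P8), note that $\log(X^{1/2}A_i^{-1}X^{1/2})=-\log\big((X^{-1/2}A_iX^{-1/2})^{-1}\big)$ after a unitary similarity of the same kind: $X^{1/2}A_i^{-1}X^{1/2}$ is the inverse of $X^{-1/2}A_iX^{-1/2}$. So $X^{-1}$ solves the equation for $\mathbb{A}^{-1}$.

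For (P9), take trace in \eqref{Karcher-eq}. Since $\tr\log=\log\det$, we get $\sum_i w_i(\log\det A_i-\log\det X)=0$.

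\textbf{Order and metric properties.} I would obtain (P4), (P5) and (P7) through the approximation scheme of Lim–Pálfia. The power means $P_t(\omega;\mathbb{A})$ are defined as the unique solutions of $X=\sum_i w_i X\sharp_t A_i$ for $t\in(0,1]$. Each map $X\mapsto\sum_i w_iX\sharp_tA_i$ is monotone, jointly concave (Kubo–Ando means are jointly concave), and a strict Thompson-metric contraction. Therefore $P_t$ inherits monotonicity, joint concavity, and the contraction estimate $d_T(P_t(\mathbb{A}),P_t(\mathbb{B}))\le\sum_i w_i d_T(A_i,B_i)$. The contraction estimate comes from $d_T(X\sharp_tA,Y\sharp_tB)\le(1-t)d_T(X,Y)+t\,d_T(A,B)$ together with subadditivity of $d_T$ under convex sums.

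Then I let $t\to0^+$. Writing $X\sharp_tA=X^{1/2}\exp(t\log X^{-1/2}AX^{-1/2})X^{1/2}$, any limit point of $P_t$ satisfies \eqref{Karcher-eq}, so $P_t\to\Lambda$ by uniqueness. This step is the main obstacle: one needs a priori boundedness of $P_t$, which I would get from $\mathcal{H}\le P_t\le\mathcal{A}$. All inequalities pass to the limit because the order cone is closed.

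\textbf{AGH inequalities.} For (P10), apply the scalar inequality $\log x\le x-1$ to the Karcher equation: $0=\sum_i w_i\log(X^{-1/2}A_iX^{-1/2})\le X^{-1/2}(\sum_i w_iA_i)X^{-1/2}-I$. This gives $X\le\mathcal{A}(\omega;\mathbb{A})$. The lower bound $\mathcal{H}\le\Lambda$ then follows from the upper bound applied to $\mathbb{A}^{-1}$ together with (P8), after inverting, since inversion reverses the order.

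In the operator setting, the same approximation argument applies once uniqueness of Karcher solutions is assumed as in the paper.
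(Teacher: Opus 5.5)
The paper states this theorem without proof, as known background on the Karcher mean, so I am judging your argument on its own merits. Your treatment of (P1)--(P3), (P6) and (P8)--(P10) is correct: in each case you check that the candidate solves the transformed Karcher equation and invoke uniqueness. The power-mean route to (P4) and (P7) is also sound. It uses fixed-point iteration for $X\mapsto\sum_i w_i X\sharp_t A_i$, the bounds $\mathcal{H}\le P_t\le\mathcal{A}$ for compactness, and the limit $t\to 0^+$; this is the standard approach behind \eqref{E:Pt-monotone}.

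There is, however, a false step in your argument for (P5). The estimate you claim for the power means, $d_T(P_t(\omega;\mathbb{A}),P_t(\omega;\mathbb{B}))\le\sum_i w_i d_T(A_i,B_i)$, fails for every $t\in(0,1]$. Take $\omega=(1/2,1/2)$, $\mathbb{A}=(I,MI)$ and $\mathbb{B}=(I,I)$ with $M>1$. Then $P_t(\omega;\mathbb{A})=\bigl(\tfrac{1+M^t}{2}\bigr)^{1/t}I$ and $P_t(\omega;\mathbb{B})=I$. Since $\tfrac{1+M^t}{2}>M^{t/2}$, this gives $d_T=\tfrac1t\log\tfrac{1+M^t}{2}>\tfrac12\log M=\sum_i w_i d_T(A_i,B_i)$.

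The culprit is the ``subadditivity of $d_T$ under convex sums.'' What actually holds is only $d_T\bigl(\sum_i w_iX_i,\sum_i w_iY_i\bigr)\le\max_i d_T(X_i,Y_i)$. So your fixed-point argument yields at best $d_T(\Lambda(\omega;\mathbb{A}),\Lambda(\omega;\mathbb{B}))\le\max_i d_T(A_i,B_i)$, not the weighted sum.

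The repair uses only what you have already proved. Set $r_i:=d_T(A_i,B_i)$, so that $e^{-r_i}A_i\le B_i\le e^{r_i}A_i$ for each $i$. Then (P4) and (P2) give $e^{-\sum_i w_ir_i}\Lambda(\omega;\mathbb{A})\le\Lambda(\omega;\mathbb{B})\le e^{\sum_i w_ir_i}\Lambda(\omega;\mathbb{A})$, which is exactly (P5).
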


%Before stating the main result, we introduce some notation:
%
%\begin{itemize}
%    \item Let \( \mathcal{P}^{0}(\mathbb{P}) \) denote the set of Borel probability measures on \( \mathbb{P} \) with bounded support.
%    \item Define the class
%    \[
%    \mathcal{L} := \left\{ f : (0, \infty) \rightarrow \mathbb{R} \ \middle| \ f \text{ is operator monotone}, \ f(1) = 0, \ f'(1) = 1 \right\}.
%    \]
%\end{itemize}
%
%We now state the key result:
%
%\begin{theorem}[Pálfia, 2016]
%Let \( f \in \mathcal{L} \) and \( \sigma \in \mathcal{P}^{0}(\mathbb{P}) \). Then, the generalized Karcher equation
%\[
%\int_{\mathbb{P}} f(X^{-1/2} A X^{-1/2}) \, d\sigma(A) = 0
%\]
%has a unique solution \( \Lambda_f(\sigma) = X \in \mathbb{P} \). T
%\end{theorem}
%
%\begin{corollary}[Pálfia, 2016]
%Let \( \nu \in \mathcal{P}^{0}([0,1]) \), and let \( \sigma \in \mathcal{P}^{0}(\mathbb{P}) \) be supported on the two points \( \{A\} \) and \( \{B\} \), where \( A, B \in \mathbb{P} \), with weights \( (1 - t) \) and \( t \in [0,1] \), respectively. Then \( \Lambda(\nu \times \sigma) \) corresponds to a Kubo--Ando mean of \( A \) and \( B \).
%\end{corollary}

Another important least squares mean is the Wasserstein mean $\Omega$ of $\mathbb{A} = (A_{1}, \ldots, A_{m}) \in \mathbb{P}_{n}^{m}$ for the Bures-Wasserstein metric:
\begin{displaymath}
\Omega(\omega; \mathbb{A}) = \underset{X \in \mathbb{P}_{n}}{\arg \min} \, \sum_{i=1}^{m} w_{i} d_{W}^{2}(X, A_{i}),
\end{displaymath}
where $d_{W}(A, B) = \left[ \tr (A + B - 2(A^{1/2} B A^{1/2})^{1/2}) \right]^{1/2}$ denotes the Bures-Wasserstein distance between $A$ and $B$.
Note from \cite{ABCM, BJL} that the Wasserstein mean coincides with the unique positive definite solution $X$ of the following matrix equation
 \begin{equation} \label{Wass-eq}
\sum_{i=1}^{m} w_{i} A_{i} \sharp X^{-1} = I,
\end{equation}
and equivalently
\begin{displaymath}
X = \sum_{i=1}^{n} w_{i} (X^{1/2} A_{i} X^{1/2})^{1/2}.
\end{displaymath}
The Wasserstein mean of $A$ and $B$ is given by
\begin{align}
A \diamond_{t} B & = (1-t)^{2} A + t^{2} B + t(1-t) \left[ A (A^{-1} \sharp B) + (A^{-1} \sharp B) A \right] \\
&= (I \nabla_t (A^{-1} \sharp B)) \, A \, (I \nabla_t (A^{-1} \sharp B)), \label{wass}
\end{align}
where $X \nabla_t Y := (1 - t)X + tY$ is the weighted arithmetic mean. The second expression \eqref{wass} for the Wasserstein mean appears in \cite{HK}. In the following we denote as $\Delta_{m}$ the set of all positive probability vectors in $\mathbb{R}^{m}$.

%Note that $A \diamond_{t} B = (1-t)^{2} A + t^{2} B + t(1-t) [A (A^{-1} \sharp B) + (A^{-1} \sharp B) A]$ for $t \in [0,1]$ is the two-variable Wasserstein mean
%\begin{displaymath}
%A \diamond_{t} B = \underset{X \in \mathbb{P}_{m}}{\arg \min} \, (1-t) d_{W}^{2}(X, A) + t d_{W}^{2}(X, B).
%\end{displaymath}

\begin{lemma} \cite{ABCM} \label{L:Wass-iteration}
Let $\omega = (w_{1}, \dots, w_{m}) \in \Delta_{m}$ and $\mathbb{A} = (A_{1}, \dots, A_{m}) \in \mathbb{P}_{n}^{m}$. For every $X_{0} \in \mathbb{P}_{n}$ the sequence $X_{k+1} = K(X_{k})$, constructed inductively from the map $K: \mathbb{P}_{n} \to \mathbb{P}_{n}$ defined by
\begin{displaymath}
K(A) := A^{-1/2} \left[ \sum_{i=1}^{m} w_{i} (A^{1/2} A_{i} A^{1/2})^{1/2} \right]^{2} A^{-1/2}, \quad A \in \mathbb{P}_{n},
\end{displaymath}
converges to $\Omega(\omega; \mathbb{A})$. Furthermore, for all natural number $k$
\begin{displaymath}
\tr X_{k} \leq \tr X_{k+1} \leq \tr \Omega(\omega; \mathbb{A}).
\end{displaymath}
\end{lemma}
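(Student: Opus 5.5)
We follow the fixed-point approach of \cite{ABCM}, working throughout with the map $K$. Write $\Omega = \Omega(\omega;\mathbb{A})$. The plan has three steps.

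\textbf{Step 1: basic properties of $K$.} First we check that $K$ is well defined and continuous on $\mathbb{P}_n$. Next we note that the fixed points of $K$ are exactly the solutions of the Wasserstein equation. Indeed,
\begin{displaymath}
K(X)=X \iff \left[\sum_i w_i (X^{1/2}A_iX^{1/2})^{1/2}\right]^2 = X^2,
\end{displaymath}
and taking positive square roots this is the equivalent form $X=\sum_i w_i (X^{1/2}A_iX^{1/2})^{1/2}$ of \eqref{Wass-eq}. Hence $\Omega$ is the unique fixed point of $K$ in $\mathbb{P}_n$.

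\textbf{Step 2: monotonicity of the trace.} Put $Y=X_k$ and $M=\sum_i w_i (Y^{1/2}A_iY^{1/2})^{1/2}$, so that $X_{k+1}=Y^{-1/2}M^2Y^{-1/2}$ and $\tr X_{k+1}=\tr(M Y^{-1} M)$. We use the variational characterization of the objective
\begin{displaymath}
F(X)=\sum_i w_i d_W^2(X,A_i) = \tr X + \sum_i w_i\tr A_i - 2\sum_i w_i \tr (X^{1/2}A_iX^{1/2})^{1/2}.
\end{displaymath}
Equivalently we view $K$ as one step of a fixed-point/gradient iteration in the Gaussian optimal-transport picture. There $\sum_i w_i T_i$ is the averaged transport map from $Y$ to the $A_i$, with $T_i=Y^{-1/2}(Y^{1/2}A_iY^{1/2})^{1/2}Y^{-1/2}$, and $X_{k+1}=T Y T$ with $T=\sum_i w_iT_i$. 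We then aim for the two inequalities
\begin{displaymath}
\tr(TYT)\ \ge\ \tr(TY)=\tr M \quad\text{and}\quad \tr M \ \ge\ \tr Y .
\end{displaymath}
The second inequality, $\tr M\ge \tr Y$, does \emph{not} hold in general. So we will instead compare through the concave functional $\Phi(X)=\sum_i w_i\tr(X^{1/2}A_iX^{1/2})^{1/2}$, which is the fidelity sum. It is homogeneous of degree $1/2$ and concave. The standard argument in \cite{ABCM} is the bound
\begin{displaymath}
\tr X_{k+1}^{1/2}\text{-type estimates:}\quad \Phi(X_k)\le (\tr X_k)^{1/2}(\tr X_{k+1})^{1/2},\qquad \Phi(X_{k+1})\ge \tr X_{k+1}\cdot(\ldots).
\end{displaymath}
Concretely, the first estimate is Cauchy--Schwarz: $\Phi(Y)=\tr M=\tr(Y^{1/2}\,Y^{-1/2}M)\le (\tr Y)^{1/2}(\tr X_{k+1})^{1/2}$. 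The second uses the transport/fidelity inequality $\tr (X^{1/2}AX^{1/2})^{1/2}\ge \tr(T_iX)$-type lower bounds applied at $X=X_{k+1}$. Together these give the ratio $\Phi(X)^2/\tr X$ nondecreasing along the iteration, and the ratio is bounded above by its value at the maximizer $\Omega$. At $\Omega$ we have $\Phi(\Omega)=\tr\Omega$, so the maximal ratio equals $\tr\Omega$. Combining these gives $\tr X_k\le\tr X_{k+1}\le\tr\Omega$.

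\textbf{Step 3: convergence.} The traces increase and are bounded, so $\{X_k\}$ is bounded above. A lower bound away from $0$ should follow from $X_{k+1}\ge$ a multiple of the harmonic-type bound obtained via $(Y^{1/2}A_iY^{1/2})^{1/2}\ge \lambda_{\min}(A_i)^{1/2}Y^{1/2}$, which gives $M\ge c\,Y^{1/2}$ and hence $X_{k+1}\ge c^2 I$. The sequence therefore lies in a compact subset of $\mathbb{P}_n$. Any limit point $Z$ satisfies $\Phi(Z)^2/\tr Z=\lim$ of the monotone ratio. Continuity and the strict increase of the ratio unless $K(Z)=Z$ (equality in Cauchy--Schwarz) force $Z$ to be a fixed point, so $Z=\Omega$ by Step 1. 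Hence the whole sequence converges to $\Omega$.

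The main obstacle is Step 2: finding the right monotone Lyapunov quantity and the matching lower bound for $\Phi(X_{k+1})$. The first thing I would try is the Cauchy--Schwarz pairing above, combined with concavity and degree-$1/2$ homogeneity of $\Phi$.
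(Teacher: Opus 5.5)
The paper quotes this lemma from \cite{ABCM} without proof, so your argument has to stand on its own. Its architecture is sound. Your Cauchy--Schwarz step correctly gives $\Phi(X_k)^2\le \tr X_k\,\tr X_{k+1}$. The scale-invariant ratio $R(X):=\Phi(X)^2/\tr X$ is indeed bounded by $R(\Omega)=\tr\Omega$; this needs the short computation $\min_{s>0}F(sX)=\sum_i w_i\tr A_i-R(X)$ together with $\Omega=\arg\min F$, which you only assert.

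\textbf{The missing lower bound.} The genuine gap is the step that carries everything: $\Phi(K(Y))\ge \tr K(Y)$ for every $Y\in\mathbb{P}_n$. You leave it open. The bound you gesture at, $\tr(X^{1/2}A_iX^{1/2})^{1/2}\ge\tr(T_iX)$ at $X=X_{k+1}$, would after averaging give $\Phi(X_{k+1})\ge\tr(TX_{k+1})$, which is not what you need. Here is the missing argument. With $T_i:=Y^{-1}\sharp A_i$ and $T:=\sum_i w_iT_i=Y^{-1/2}MY^{-1/2}$, one has $T_iYT_i=A_i$ and $TYT=K(Y)$. Put $P:=Y^{1/2}T$ and $Q_i:=Y^{1/2}T_i$, so $P^*P=K(Y)$ and $Q_i^*Q_i=A_i$. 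Polar decomposition gives $|\tr(P^*Q_i)|\le \tr|A_i^{1/2}K(Y)^{1/2}|=\tr(K(Y)^{1/2}A_iK(Y)^{1/2})^{1/2}$. Averaging yields $\tr K(Y)=\sum_i w_i\tr(P^*Q_i)\le\Phi(K(Y))$. Only with this do you get $\tr X_k\le R(X_k)\le\tr X_{k+1}\le R(X_{k+1})\le\tr\Omega$.

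The first of these inequalities needs $X_k\in K(\mathbb{P}_n)$, i.e.\ $k\ge 1$. That restriction is real and your proposal misses it: $K(c\,\Omega)=\Omega$ for every $c>0$, so $X_0=4\Omega$ gives $\tr X_1<\tr X_0$.

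\textbf{Errors in Step 3.} Step 3 has two further errors.

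First, $M\ge cY^{1/2}$ does not imply $M^2\ge c^2Y$, because squaring is not operator monotone, and the claimed conclusion is false. Take $w=(\frac12,\frac12)$, $Y=\mathrm{diag}(1,10^{-4})$, $M_1=1.01\,\mathrm{diag}(1,10^{-2})$, and $M_2=1.01\,(2e_1e_1^{T}+uu^{T})$ with $u=(-\frac{\sqrt{3}}{2},\frac12)^{T}$. Set $A_i:=Y^{-1/2}M_i^2Y^{-1/2}$, so that $(Y^{1/2}A_iY^{1/2})^{1/2}=M_i$. One checks $M_i^2\ge Y$, i.e.\ $A_i\ge I$, so your constant satisfies $c\ge 1$. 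Yet $\|(\frac{M_1+M_2}{2})^{-1}e_1\|\approx 1.27>1$, so $(\frac{M_1+M_2}{2})^2\not\ge e_1e_1^{T}$, hence $\not\ge Y$, i.e.\ $K(Y)\not\ge I$. A correct lower bound uses determinants. By concavity of $\log\det$, $\det K(Y)=(\det M)^2/\det Y\ge\prod_i(\det A_i)^{w_i}$. Together with $\tr X_k\le\tr\Omega$ this gives $\lambda_{\min}(X_k)\ge\prod_i(\det A_i)^{w_i}/(\tr\Omega)^{n-1}$ for $k\ge1$.

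Second, because $R$ is scale-invariant, $R(Z)=R(K(Z))$ does not force $K(Z)=Z$; again $K(c\,\Omega)=\Omega$. Equality in Cauchy--Schwarz only yields $M_Z:=\sum_i w_i(Z^{1/2}A_iZ^{1/2})^{1/2}=\lambda Z$. Hence $K(Z)=\lambda^2Z$, which is a fixed point of $K$, so $K(Z)=\Omega$. To conclude $Z=\Omega$ you must also use that $\tr X_k$ converges, so that $\tr Z=\tr K(Z)$ and therefore $\lambda=1$.
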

Note from \cite{HK17, HK19} that Karcher and Wasserstein means satisfy the extended Lie-Trotter formula:
\begin{displaymath}
\lim_{s \to 0} \Lambda(\omega; \mathbb{A}^{s})^{1/s} = \exp \left( \sum_{i=1}^{m} w_{i} \log A_{i} \right) = \lim_{s \to 0} \Omega(\omega; \mathbb{A}^{s})^{1/s},
\end{displaymath}
where the middle expression is known as the log-Euclidean mean $\textrm{LE}(\omega; \mathbb{A})$.
Furthermore, the following relationships among Karcher, log-Euclidean and Wasserstein means have been shown in \cite{BJL, BJL-1}:
\begin{equation} \label{E:multi-log-m}
\mathcal{H}(\omega; \mathbb{A}) \leq \Lambda(\omega; \mathbb{A}) \prec_{\log} \textrm{LE}(\omega; \mathbb{A}) \prec_{w \log} \Omega(\omega; \mathbb{A}) \leq \mathcal{A}(\omega; \mathbb{A}).
\end{equation}
%where
%\begin{center}
%$\displaystyle \mathcal{A}(\omega; \mathbb{A}) = \sum_{i=1}^{n} w_{i} A_{i}$ \quad and \quad $\displaystyle \mathcal{H}(\omega; \mathbb{A}) = \left( \sum_{i=1}^{n} w_{i} A_{i}^{-1} \right)^{-1}$
%\end{center}
%are known as the weighted arithmetic and harmonic means respectively.

The paper is organized as follows. In Section 2, we recall the two-variable
spectral geometric mean and related recent results. We then introduce a
multivariable equation arising from the spectral geometric mean
\eqref{E:n-equation} and show that this equation need not have a unique
solution in the multi-variable case. In Section 3, we study properties of
the solutions of \eqref{E:n-equation} and compare them with
the power mean and the log-Euclidean mean. Section 4 is devoted to
generalized Karcher means and two-variable alternative means. We also consider
multivariable equations arising from these alternative means and conclude with an open
problem.

\section{The multivariable mean equation}

The weighted spectral geometric mean of $A, B \in \mathbb{P}_{n}$ is defined in \cite{LL} as
\begin{align}\label{spe}
A \natural_t B &= (A^{-1} \sharp B)^t A (A^{-1} \sharp B)^t, \quad t \in [0,1] \\
               &= (I \sharp_t (A^{-1} \sharp B)) A (I \sharp_t (A^{-1} \sharp B)).
\end{align}
Note from \cite{FP} that $A \natural B = A \natural_{1/2} B$ has the same spectrum with $(A B)^{1/2}$. It is not a Kubo-Ando's operator mean, which means that the monotonicity with respect to the L{\"o}wner   order does not hold. It is interesting to note that the second expression of Wasserstein mean in \eqref{wass} shows a structural similarity to the
spectral geometric mean, with the weighted arithmetic mean replacing the metric
geometric mean. Recently, the weighted spectral geometric mean has been further studied from
several viewpoints. In particular, Gan, Kim and Mer investigated
characterizations and the linearity problem of the weighted spectral geometric
mean \cite{GKM}. It has been shown in \cite[Theorem 3.2]{GK} that the weighted spectral geometric mean $A \natural_{t} B$ is a unique positive definite solution $X$ to the equation
\begin{equation} \label{E:2-equation}
(X^{-1} \sharp A) \sharp_{t} (X^{-1} \sharp B) = I,
\end{equation}
which is equivalent to
\begin{equation}
t \log(A \sharp X^{-1}) + (1 - t) \log(B \sharp X^{-1}) = 0.
\end{equation}
The following log-majorization relationships have been shown in \cite{AH, GH24, GT24}:
\begin{equation} \label{E:log-majorization}
A \sharp_{t} B \prec_{\log} e^{(1-t) \log A + t \log B} \prec_{\log} A \natural_{t} B \preceq A \diamond_{t} B,
\end{equation}
where $\preceq$ denotes the near order: $A \preceq B$ if and only if $A^{-1} \sharp B \geq I$. Furthermore, the following geodesic property has been verified:
\begin{theorem}[\cite{GK}]
The weighted spectral geometric mean \(A \natural_t B\) is a geodesic with respect to the semi-metric  $d_{S}(A, B) = \| \log(A^{-1} \sharp B) \|.$
That is, for all \(s, t \in \mathbb{R}\),
\[
d_{S}(A \natural_s B, A \natural_t B) = |s - t| \, d_{S}(A, B).
\]
\end{theorem}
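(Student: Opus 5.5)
The plan is to compute $A^{-1}\sharp(A\natural_t B)$ explicitly, show it equals a power of $G := A^{-1}\sharp B$, and then deduce the distance formula from the scaling of $\log$. Throughout, $d_S(X,Y)=\|\log(X^{-1}\sharp Y)\|$ with any unitarily invariant norm.

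Fix $A,B\in\mathbb{P}$ and set $G := A^{-1}\sharp B$. For every $t\in\mathbb{R}$,
\[
A\natural_t B = G^{t} A G^{t}.
\]
The right-hand side makes sense for all real $t$, so this extends the curve beyond $[0,1]$.

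\emph{Step 1.} The key identity is $X^{-1}\sharp(YXY)=Y$ whenever $X,Y\in\mathbb{P}$. This holds because $X^{-1}\sharp Z$ is the unique positive definite solution $W$ of the Riccati equation $WXW=Z$. Taking $W=Y$ and $Z=YXY$, uniqueness gives the identity.

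\emph{Step 2.} Compute $(A\natural_sB)^{-1}\sharp(A\natural_tB)$. Write $M_s := G^{s}AG^{s}$. Then
\[
M_t = G^{t-s}\,\bigl(G^{s}AG^{s}\bigr)\,G^{t-s} = G^{t-s} M_s G^{t-s}.
\]
Apply Step 1 with $X=M_s$ and $Y=G^{t-s}\in\mathbb{P}$. This gives
\[
M_s^{-1}\sharp M_t = G^{t-s}.
\]

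\emph{Step 3.} Take logarithms and norms:
\[
d_S(M_s,M_t) = \|\log G^{t-s}\| = \|(t-s)\log G\| = |t-s|\,\|\log G\|.
\]
Finally, $d_S(A,B)=\|\log G\|$ by definition, which proves the claim for all $s,t\in\mathbb{R}$.

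The only delicate point is the commutation used in Step 2. The matrices $G^{t-s}$ and $G^{s}$ are functions of the same positive operator $G$, so they commute and satisfy $G^{t-s}G^{s}=G^{t}$. I would check this carefully, together with the uniqueness of the Riccati solution, which is standard for positive definite operators. No norm-specific property is needed beyond absolute homogeneity.
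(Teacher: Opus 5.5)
Your proof is correct and complete. The identity $X^{-1}\sharp(YXY)=Y$ follows from Riccati uniqueness, and it gives $(A\natural_s B)^{-1}\sharp(A\natural_t B)=(A^{-1}\sharp B)^{t-s}$; the formula then follows from $\log G^{t-s}=(t-s)\log G$ and homogeneity of the norm, for any norm and all real $s,t$. The paper does not prove this statement (it is quoted from \cite{GK}), but your key step is the same Riccati-uniqueness trick the paper uses in Remark~\ref{notuniq} to obtain $A_i\sharp X_0^{-1}=B_i$ from $A_i=B_iX_0B_i$.
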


We study in this section the existence and uniqueness of a positive definite solution to the matrix equation
\begin{equation} \label{E:n-equation}
 \Psi (X) = \Lambda \big(\omega; A_{1} \sharp X^{-1}, A_{2} \sharp X^{-1}, \ldots, A_{m} \sharp X^{-1}\big) - I = 0.
\end{equation}
Note that \eqref{E:n-equation} is a multi-variable extension of \eqref{E:2-equation}, and is equivalent to
\[
\Lambda\!\left(\omega; (X^{1/2} A_{1} X^{1/2})^{1/2}, \ldots, (X^{1/2} A_{m} X^{1/2})^{1/2}\right) = X,
\]
by the congruence invariance of the Karcher mean (P6).
By the Karcher equation, \eqref{E:n-equation} can also be written equivalently as
\begin{align}\label{phi}
\Phi(X) = \sum_{i=1}^{m} w_i \log(A_i \sharp X^{-1}) = 0.
\end{align}

% \begin{remark} \label{R:Karcher-Sp}
% By the Karcher equation, \eqref{E:n-equation} can also be written equivalently as
% \begin{align}\label{phi}
% \Phi(X) = \sum_{i=1}^{m} w_i \log(A_i \sharp X^{-1}) = 0.
% \end{align}
% We denote as $\mathcal{S}(\omega; \mathbb{A})$ the unique solution $X \in \mathbb{P}_{m}$ to \eqref{phi} and call it the multi-variable spectral geometric mean of $\mathbb{A} = (A_{1}, \dots, A_{n}) \in \mathbb{P}_{m}^{n}$.
% Moreover, $\mathcal{S}(\omega; \mathbb{A})$ is the unique solution $X \in \mathbb{P}_{m}$ to the equation
% \begin{displaymath}
% \Lambda(\omega; (X^{-1} \sharp A_{1})^{p}, \dots, (X^{-1} \sharp A_{n})^{p}) = I
% \end{displaymath}
% for all $p > 0$.
% \end{remark}

\begin{remark} \label{brouwer}
We denote by $\Gamma(\omega; \mathbb{A})$ the set of solutions to the equation
$\Psi (X) = 0.$
We now show that $\Gamma(\omega; \mathbb{A}) \neq \emptyset$. Let $\mathbb{A} = (A_1, A_2, \ldots, A_m) \in \mathbb{P}^m_n$ with
$\alpha I \leq A_j \leq \beta I$ for all $j$ and for some $\alpha, \beta > 0$. Define the L\"owner interval
$K := [\alpha I, \beta I].$
For $X \in K$, we have
\[
(X^{1/2} A_i X^{1/2})^{1/2} \leq (\beta X)^{1/2} \leq \beta I.
\]
By the monotonicity property of the Karcher mean (P4), it follows that
\[
\Lambda \!\left( \omega; \, (X^{1/2} A_{1} X^{1/2})^{1/2}, (X^{1/2} A_{2} X^{1/2})^{1/2}, \ldots, (X^{1/2} A_{m} X^{1/2})^{1/2} \right) \leq \beta I.
\]
Similarly, one obtains
\[
\alpha I \leq
\Lambda \!\left( \omega; (X^{1/2} A_{1} X^{1/2})^{1/2}, (X^{1/2} A_{2} X^{1/2})^{1/2}, \ldots, (X^{1/2} A_{m} X^{1/2})^{1/2} \right).
\]
Thus, the continuous map $\Lambda \!\left( \omega; (X^{1/2} A_{1} X^{1/2})^{1/2}, (X^{1/2} A_{2} X^{1/2})^{1/2}, \ldots, (X^{1/2} A_{m} X^{1/2})^{1/2} \right)$ maps the compact convex set $K$ into itself. By Brouwer’s fixed point theorem, there exists a point $X \in K$ such that $\Phi(X) = 0$. We conclude that $\Gamma(\omega; \mathbb{A}) \neq \emptyset.$
\end{remark}

\begin{lemma}\label{ineq}
Let $\mathbb{A} = (A_1, A_2, \ldots, A_m) \in \mathbb{P}^{m}_n$ and $\omega = (w_{1}, w_{2}, \dots, w_{m}) \in \Delta_{m}$ with $\alpha I \leq A_j \leq \beta I $ for all $j$ and some $\alpha, \beta > 0$. Then $X \in \Gamma(\omega; \mathbb{A})$ satisfies
\begin{displaymath}
\alpha I \leq X \leq \beta I.
\end{displaymath}
\end{lemma}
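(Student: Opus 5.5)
The plan is to work with the fixed-point form of the equation rather than with $\Psi$ directly. By the equivalence noted after \eqref{E:n-equation}, using (P6), every $X \in \Gamma(\omega; \mathbb{A})$ satisfies
\[
X = \Lambda\!\left(\omega; (X^{1/2} A_{1} X^{1/2})^{1/2}, \ldots, (X^{1/2} A_{m} X^{1/2})^{1/2}\right).
\]
I would then compare $X$ with scalar multiples of $I$ through its extreme eigenvalues. This is a self-improving bound: a bound on $X$ feeds back into the right-hand side as a square root of that bound.

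For the upper bound, let $\lambda > 0$ be the largest eigenvalue of $X$, so $X \leq \lambda I$.
\begin{itemize}
\item Since $A_i \leq \beta I$, congruence by $X^{1/2}$ gives $X^{1/2} A_i X^{1/2} \leq \beta X \leq \beta\lambda I$.
\item The square root is operator monotone, so $(X^{1/2} A_i X^{1/2})^{1/2} \leq (\beta\lambda)^{1/2} I$ for every $i$.
\item Monotonicity (P4), together with $\Lambda(\omega; cI, \ldots, cI) = cI$ from (P1), yields $X \leq (\beta\lambda)^{1/2} I$.
\end{itemize}
Evaluating at a unit eigenvector for $\lambda$ gives $\lambda \leq (\beta\lambda)^{1/2}$. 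Hence $\lambda \leq \beta$, and so $X \leq \beta I$.

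The lower bound is symmetric. Let $\mu > 0$ be the smallest eigenvalue of $X$, so $X \geq \mu I$.
\begin{itemize}
\item From $A_i \geq \alpha I$ we get $X^{1/2} A_i X^{1/2} \geq \alpha X \geq \alpha\mu I$.
\item Operator monotonicity of the square root gives $(X^{1/2} A_i X^{1/2})^{1/2} \geq (\alpha\mu)^{1/2} I$.
\item By (P4) and (P1), $X \geq (\alpha\mu)^{1/2} I$.
\end{itemize}
Testing on an eigenvector for $\mu$ gives $\mu \geq (\alpha\mu)^{1/2}$, i.e. $\mu \geq \alpha$, so $X \geq \alpha I$.

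There is no serious obstacle. The one point needing care is to bound $X$ by its own extreme eigenvalues rather than assuming a priori that $X \in [\alpha I, \beta I]$. Remark~\ref{brouwer} only produced some solution in that interval; the lemma asserts the bound for every solution, so the eigenvalue trick is what closes the argument.
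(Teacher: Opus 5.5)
Your proof is correct and is essentially the paper's argument: it uses the fixed-point form, operator monotonicity of the square root, and monotonicity (P4) of the Karcher mean. The only difference is cosmetic. The paper compares each argument directly with $\sqrt{\alpha}X^{1/2}$ and $\sqrt{\beta}X^{1/2}$, obtains $\sqrt{\alpha}X^{1/2} \le X \le \sqrt{\beta}X^{1/2}$, and solves this inequality between commuting matrices; you first replace $X$ by its extreme eigenvalues, which comes to the same thing.
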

\begin{proof}
Let $X \in \Gamma(\omega; \mathbb{A})$ . Since $\alpha I \leq A_{i} \leq \beta I$ for all $i$,
\begin{displaymath}
\sqrt{\alpha} X^{1/2} \leq (X^{1/2} A_{i} X^{1/2})^{1/2} \leq \sqrt{\beta} X^{1/2}.
\end{displaymath}
By the monotonicity of the Karcher mean with respect to the Loewner order (P4),
\begin{displaymath}
\sqrt{\alpha} X^{1/2} \leq \Lambda(\omega; (X^{1/2} A_{1} X^{1/2})^{1/2}, \dots, (X^{1/2} A_{m} X^{1/2})^{1/2}) = X \leq \sqrt{\beta} X^{1/2}.
\end{displaymath}
Solving the above for $X$, we obtain the desired inequalities.
\end{proof}

\begin{theorem} \label{T:-spe-epsilon}
Let $\omega = (w_{1}, w_{2}, \dots, w_{m}) \in \Delta_{m}$. Then there exists $\epsilon_{\omega} > 1$ such that for any $\mathbb{A} \in [\epsilon_{\omega}^{-1} I, \epsilon_{\omega} I]^m \subset  \mathbb{P}^{m}_n$, the equation $\Phi(X) =0$ has a unique solution in $[\epsilon_{\omega}^{-1} I, \epsilon_{\omega} I] \subset \mathbb{P}_n$.
\end{theorem}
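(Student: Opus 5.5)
Existence in $[\epsilon^{-1}I,\epsilon I]$ is already settled for every $\epsilon>1$: by Remark \ref{brouwer} (with $\alpha=\epsilon^{-1}$, $\beta=\epsilon$) there is a solution there, and Lemma \ref{ineq} places every solution in that interval. So the whole task is uniqueness, and we get it from an inverse-function-theorem argument around the base point. At $A_1=\dots=A_m=I$ the equation has the solution $X=I$. We show that the Jacobian of $\Phi$ with respect to $X$ is invertible at $(\mathbb{A},X)=(I,\dots,I;I)$. Then we use a quantitative inverse-function / contraction estimate to see that, for all data in a small Thompson ball around $I$, the map $X\mapsto\Phi(X)$ is injective on a fixed neighbourhood of $I$. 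We view $\Phi(X)=\Phi(\mathbb{A};X)=\sum_i w_i\log(A_i\sharp X^{-1})$ as a smooth (real-analytic) map $\mathbb{P}_n^m\times\mathbb{P}_n\to\mathbb{H}_n$.

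\textbf{Step 1: the derivative at the base point.} At $A_i=I$ we have $A_i\sharp X^{-1}=X^{-1/2}$, so $\Phi(I,\dots,I;X)=-\tfrac12\log X$. Its derivative at $X=I$ is $H\mapsto -\tfrac12 H$, which is invertible on $\mathbb{H}_n$.

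\textbf{Step 2: uniform control of the derivative.} Since $\Phi$ is $C^1$ (indeed $C^2$) jointly in $(\mathbb{A},X)$, continuity of $D_X\Phi$ gives some $\epsilon>1$ such that
\[
\bigl\|D_X\Phi(\mathbb{A};X)+\tfrac12\,\mathrm{id}\bigr\|\le \tfrac14
\]
whenever all $A_i$ and $X$ lie in $[\epsilon^{-1}I,\epsilon I]$. This step is the main obstacle. We must make the modulus of continuity of $D_X\Phi$ explicit, or at least argue by compactness that a uniform $\epsilon$ exists, and this $\epsilon$ depends only on $\omega$ (and on $n$, if one is careful). I would first try compactness. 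The set $\{(\mathbb{A},X): A_i,X\in[\epsilon^{-1}I,\epsilon I]\}$ shrinks to the single point $(I,\dots,I;I)$ as $\epsilon\downarrow 1$, so continuity of $D_X\Phi$ at that point already gives the bound. An explicit estimate would instead use the Fréchet derivatives of $\log$, of $\sharp$ and of inversion.

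\textbf{Step 3: conclusion via the mean value inequality.} Fix $\mathbb{A}$ in the cube $[\epsilon^{-1}I,\epsilon I]^m$ and suppose $X,Y\in[\epsilon^{-1}I,\epsilon I]$ both solve $\Phi=0$. The interval $[\epsilon^{-1}I,\epsilon I]$ is convex, so the segment between $X$ and $Y$ stays inside it. Along this segment Step 2 gives
\[
0=\Phi(Y)-\Phi(X)=\int_0^1 D_X\Phi\bigl(\mathbb{A};X+s(Y-X)\bigr)(Y-X)\,ds = -\tfrac12(Y-X)+R,
\qquad \|R\|\le\tfrac14\|Y-X\|.
\]
Hence $\tfrac14\|Y-X\|\le 0$, so $X=Y$. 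Setting $\epsilon_\omega:=\epsilon$ finishes the proof, with existence supplied by Remark \ref{brouwer}.
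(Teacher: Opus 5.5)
Your proof is correct, and it reaches the conclusion by a different mechanism than the paper's.

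The paper works with the fixed-point form $H^{\omega}(\mathbb{A},X)=X-\Lambda(\omega;(X^{1/2}A_1X^{1/2})^{1/2},\dots,(X^{1/2}A_mX^{1/2})^{1/2})$. Asserting that $H^{\omega}$ is Fr\'echet differentiable quietly relies on differentiability of the Karcher mean. The paper then computes $D_2H^{\omega}(\mathbb{I},I)=\tfrac12\,\mathrm{id}$ and invokes the Implicit Function Theorem. The theorem's uniqueness clause on a product neighbourhood $V\times W$, combined with Lemma \ref{ineq} (every solution lies in the L\"owner interval, hence in $W$), yields $\Gamma(\omega;\mathbb{A})=\{G(\mathbb{A})\}$. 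Existence also comes from the IFT.

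You instead work with the log form $\Phi$, whose smoothness is elementary since it composes $\sharp$, inversion and $\log$. You replace the IFT by the estimate that underlies it: a uniform bound $\|D_X\Phi+\tfrac12\mathrm{id}\|\le\tfrac14$ on the cube, together with the integral mean value formula along segments of the convex interval. Existence is imported from Remark \ref{brouwer}. For the uniform bound, no compactness argument is needed. Continuity at the base point suffices, because $\epsilon^{-1}I\le X\le\epsilon I$ forces $\|X-I\|\le\epsilon-1$ in operator norm.

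Your route buys three things:
\begin{itemize}
\item It avoids differentiating $\Lambda$.
\item The uniqueness region is explicitly the convex interval itself, not an abstract neighbourhood $W$ into which the interval must be fitted.
\item It proves more than you claim. Since $D_X\Phi(\mathbb{A};X)=\sum_i w_i\,D_X\log(A_i\sharp X^{-1})$ is a convex combination of terms, each within $\tfrac14$ of $-\tfrac12\mathrm{id}$ under a condition on $(A_i,X)$ alone, $\epsilon$ can be chosen independent of $\omega$ and $m$, depending only on $n$.
\end{itemize}

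In exchange, the paper's IFT approach gives existence without Brouwer and $C^1$ dependence of the solution on $\mathbb{A}$.
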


\begin{proof}
We define the function $H^{\omega} : \mathbb{P}_n^m \times \mathbb{P}_n \rightarrow \mathbb{H}_n$ by
\begin{displaymath}
H^{\omega} (\mathbb{A},X)  := X - \Lambda(\omega; (X^{1/2} A_{1} X^{1/2})^{1/2},(X^{1/2} A_{2} X^{1/2})^{1/2}, \cdots,(X^{1/2} A_{m} X^{1/2})^{1/2}),
\end{displaymath}
for $(A_1, A_2, \ldots, A_m) \in \mathbb{P}_n^m$ and $\omega = (w_{1}, w_{2}, \dots, w_{m}) \in \Delta_{m}$. The function $H^{\omega}: \mathbb{P}_n^m \times \mathbb{P}_n \rightarrow \mathbb{H}_n$ is continuously Fréchet differentiable.
The derivative $D_2 H^{\omega}(\mathbb{A} ,X)$ maps from $\mathbb{H}_n$ to $\mathbb{H}_n$. Considering $\mathbb{I} = (I, \ldots, I) \in \mathbb{P}_n^m$, we find $H^{\omega}(\mathbb{I}, X) = X - X^{1/2}$ and $H^{\omega}(\mathbb{I}, I) = 0$.
The derivative $D_2 H^{\omega}(\mathbb{I}, X)(Y)$ is given by
\begin{displaymath}
D_2 H^{\omega}(\mathbb{I}, X)(Y) = Y - \int_0^\infty e^{-t X^{1/2}} Y e^{-t X^{1/2}} \, dt
\end{displaymath}
At $X = I$,
\begin{displaymath}
D_2 H^{\omega}(\mathbb{I}, I)(Y) = Y - \int_0^\infty e^{-2t} \, dt \, Y = Y - \frac{1}{2} Y = \frac{1}{2} Y.
\end{displaymath}
Thus, $D_2 H^{\omega}(\mathbb{I}, I) :\mathbb{H}_n \rightarrow \mathbb{H}_n $ is an invertible map.
By Implicit Function Theorem, there exist open sets $V \subseteq \mathbb{P}_n^m$ containing $\mathbb{I}$ and $W \subseteq \mathbb{P}_n$ containing $I$, and a continuously differentiable function $G : V \rightarrow W$ such that $H^{\omega}_{\mathbb{B}} ( G(\mathbb{B})) = 0$ for all $\mathbb{B} \in V$ and $G(\mathbb{I}) = I$.

Choose $\epsilon >1$ such that $[\epsilon^{-1} I , \epsilon I]^m \subset V \cap W$. Then $\mathbb{A} \in [\epsilon^{-1} I, \epsilon I]^m$.
For $X \in \Gamma(\omega, \mathbb{A} )$,
\begin{displaymath}
\epsilon^{-1} I \leq X \leq \epsilon I
\end{displaymath}
by Lemma \ref{ineq}.
Therefore, $X \in [\epsilon^{-1} I, \epsilon I] \subset W$. From the previous discussion, there exists a continuously differentiable function $G: [\epsilon^{-1} I , \epsilon I]^m \to [\epsilon^{-1} I , \epsilon I]$ such that $H^{\omega} (\mathbb{A},X)  = 0$ and $G(\mathbb{A}) = X$. That means $\Gamma(\omega; \mathbb{A}) = \{ X \}$.
Hence, the equation $\Phi(X) =0$ has a unique solution in $[\epsilon_{\omega}^{-1} I, \epsilon_{\omega} I]$.

\end{proof}

\begin{lemma}\label{lem:log-formula}
Let
\[
G = \begin{pmatrix} a & b \\[2pt] b & d \end{pmatrix} \in \mathbb{P}_2,
\qquad
\det G = 1.
\]
Set $\displaystyle M := \frac{G - G^{-1}}{2}$. Then $G = tI + M$ for $t = \frac{1}{2} \tr G$, and
% \[
% t:=\frac{a+d}{2},
% \qquad
% M:=
% \begin{pmatrix}
% \dfrac{a-d}{2} & b\\[6pt]
% b & -\dfrac{a-d}{2}
% \end{pmatrix}.
% \]
% Then \(t\ge 1\), \(G=tI+M\), \(M^2=(t^2-1)I\), and
\[
\log G = \theta \, M,
\]
where
\[
\theta =
\begin{cases}
\dfrac{\arccosh(t)}{\sqrt{t^2-1}}, & t>1,\\[10pt]
1, & t=1.
\end{cases}
\]
\end{lemma}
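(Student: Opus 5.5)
The plan is to diagonalize $G$ and read off both identities eigenvalue by eigenvalue. The only ingredient is that $\det G = 1$ forces the eigenvalues of $G$ to be $\lambda$ and $\lambda^{-1}$ for some $\lambda \geq 1$.

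\textbf{The decomposition $G = tI + M$.} By Cayley--Hamilton, $G^2 - (\tr G)G + (\det G)I = 0$. Since $\det G = 1$ and $G$ is invertible, multiplying by $G^{-1}$ gives $G + G^{-1} = (\tr G) I = 2tI$. Therefore
\[
G = \frac{G+G^{-1}}{2} + \frac{G-G^{-1}}{2} = tI + M.
\]

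\textbf{Spectral data of $G$ and $M$.} Write $G = U\,\mathrm{diag}(\lambda,\lambda^{-1})\,U^*$ with $U$ unitary and $\lambda \geq 1$. Then
\[
t = \tfrac{1}{2}(\lambda + \lambda^{-1}) = \cosh s, \qquad s := \log \lambda \geq 0 .
\]
In particular $t \geq 1$, with $t = 1$ exactly when $\lambda = 1$. In the same basis,
\[
M = U\,\mathrm{diag}\bigl(\tfrac{\lambda-\lambda^{-1}}{2},\, -\tfrac{\lambda-\lambda^{-1}}{2}\bigr)U^* = \sinh s \; U\,\mathrm{diag}(1,-1)\,U^*,
\]
while $\log G = U\,\mathrm{diag}(s,-s)\,U^* = s\, U\,\mathrm{diag}(1,-1)\,U^*$.

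\textbf{Case $t > 1$.} Here $s > 0$, so $\sinh s \neq 0$ and $\log G = \frac{s}{\sinh s} M$. Since $s = \arccosh(t)$ and $\sinh s = \sqrt{\cosh^2 s - 1} = \sqrt{t^2-1}$, this is exactly $\theta = \arccosh(t)/\sqrt{t^2-1}$.

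\textbf{Case $t = 1$.} Here $\lambda = 1$, so $G = I$, hence $M = 0$ and $\log G = 0 = 1\cdot M$. Any value of $\theta$ works in this case, including the stated $\theta = 1$, which is the limit of the $t>1$ formula as $t \to 1$.

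There is no real obstacle in this argument. The one point that needs care is the branch: we need $s = \arccosh(t)$ rather than $-\arccosh(t)$, and this holds because we chose the eigenvalue $\lambda \geq 1$, which makes $s = \log \lambda \geq 0$.
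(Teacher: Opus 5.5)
Your argument is correct, but it takes a different route from the paper's. The paper never diagonalizes $G$. It computes $G^{-1}$ entrywise (adjugate formula) to get $M = \begin{pmatrix} \frac{a-d}{2} & b \\ b & -\frac{a-d}{2} \end{pmatrix}$, and verifies algebraically that $M^2 = (t^2-1)I$. It then normalizes to the involution $M/\sqrt{t^2-1}$ and recognizes $G = \cosh\rho\, I + \sinh\rho\, M/\sqrt{t^2-1}$ as the exponential of the Hermitian matrix $\rho M/\sqrt{t^2-1}$, where $\rho = \arccosh(t)$. Uniqueness of the Hermitian logarithm of a positive definite matrix finishes the proof; eigenvalues are used only to see $t \ge 1$ (and $G = I$ when $t = 1$).

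You instead obtain $G + G^{-1} = (\tr G)I$ coordinate-free from Cayley--Hamilton. You then read off $M$ and $\log G$ in an eigenbasis, where both are visibly multiples of the same reflection $U\,\mathrm{diag}(1,-1)\,U^*$, which is precisely the paper's involution $M/\sqrt{t^2-1}$.

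Your version makes the branch and sign issues transparent ($\arccosh(t) \ge 0$, $\sinh s = \sqrt{t^2-1}$), and it applies verbatim to complex Hermitian $G$. The paper's version avoids spectral decomposition and directly produces the entrywise form $M = \frac{a-d}{2}S + bT$ that Remark~\ref{notuniq} uses to write $\Phi(X(u,v)) = F_1 S + F_2 T$. In your setting that form follows at once from $M = G - tI$.
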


\begin{proof}
Since $G^{-1} = \begin{pmatrix} d & -b \\[2pt] -b & a \end{pmatrix}$, we obtain $M = \begin{pmatrix} \frac{a-d}{2} & b \\[2pt] b & -\frac{a-d}{2} \end{pmatrix}$ so $\displaystyle G = \left( \frac{a+d}{2} \right) I + M$.

Since \(G \in \mathbb{P}_2\) and \(\det G = 1\), its eigenvalues are \(\lambda, \lambda^{-1} > 0\), so
\[
t = \frac{\lambda + \lambda^{-1}}{2} \ge 1.
\]
Also,
\[
M^2 = \left( \frac{a-d}{2} \right)^2 I + b^2 I
= \left( \frac{(a+d)^2}{4} - ad + b^2 \right) I
= \left(t^2 - \det G \right) I
= (t^2-1) I.
\]
If \(t=1\), then \(\lambda=1\), hence \(G=I\), \(M=0\), and the formula is trivial.

Assume now that \(t > 1\).  Set
\[
U := \frac{M}{\sqrt{t^2-1}},
\]
so that \(U^2 = I\).  Writing \(\rho := \arccosh(t)\), we have $t = \cosh \rho$ and $\sqrt{t^2-1} = \sinh\rho$.
Therefore,
\[
G=tI+M
=
\cosh\rho\,I+\sinh\rho\,U
=
e^{\rho U}.
\]
Since \(G\in \PP_2\), the principal logarithm is
\[
\log G=\rho U
=
\frac{\arccosh(t)}{\sqrt{t^2-1}}\,M,
\]
as claimed.
\end{proof}
The Poincar\'e--Miranda theorem in dimension \(2\):
\begin{theorem}
 If \(F=(F_1,F_2)\) is continuous on a closed rectangle \([a,b] \times [c,d]\) such that
\[
F_1(a,v) \ge 0, \quad F_1(b,v) \le 0
\qquad (c \le v \le d)
\]
and
\[
F_2(u,c) \ge 0, \quad F_2(u,d) \le 0
\qquad (a \le u \le b),
\]
then there exists \((u_*,v_*) \in [a,b] \times [c,d]\) such that
\[
F_1(u_*,v_*) = F_2(u_*,v_*) = 0.
\]
\end{theorem}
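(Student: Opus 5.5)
The statement just quoted is the Poincar\'e--Miranda theorem in dimension $2$. I plan to derive it from Brouwer's fixed point theorem, which the paper already uses in Remark~\ref{brouwer}, via a truncated Newton-type map on the rectangle.

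Write $R=[a,b]\times[c,d]$ and fix $\lambda>0$. Define
\[
T(u,v) := \bigl(\pi_1(u+\lambda F_1(u,v)),\ \pi_2(v+\lambda F_2(u,v))\bigr),
\]
where $\pi_1(s)=\min\{b,\max\{a,s\}\}$ and $\pi_2(s)=\min\{d,\max\{c,s\}\}$ are the clamps onto $[a,b]$ and $[c,d]$. Since $F$ and the clamps are continuous, $T$ is a continuous self-map of the compact convex set $R$. Brouwer's theorem then gives a fixed point $(u_*,v_*)\in R$.

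It remains to show that $F_1(u_*,v_*)=0$; the argument for $F_2$ is symmetric. Suppose first that $F_1(u_*,v_*)>0$. Then $u_*+\lambda F_1>u_*$, so the clamped value $\pi_1(u_*+\lambda F_1)$ can equal $u_*$ only if $u_*=b$. But the hypothesis gives $F_1(b,v)\le 0$ for every $v$, which is a contradiction. Now suppose $F_1(u_*,v_*)<0$. The same reasoning forces $u_*=a$, contradicting $F_1(a,v)\ge0$.

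So in every case $F_1(u_*,v_*)=0$, and likewise $F_2(u_*,v_*)=0$. The only point that needs care is this case analysis at the fixed point: a coordinate can fail to move only because the clamp is active, and the sign conditions on the edges rule that out. Beyond that, the argument is routine.
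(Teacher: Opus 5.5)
Your argument is correct, and it goes further than the paper does. The paper gives no proof of this statement. It quotes the two-dimensional Poincar\'e--Miranda theorem as a classical result and uses it only as a black box in Remark~\ref{notuniq}, where the numerically computed sign bounds on the edges of $R$ certify a second zero $(u_*,v_*)$ of $(F_1,F_2)$.

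Your clamped-map construction gives a self-contained derivation from Brouwer's fixed point theorem. The paper already invokes Brouwer in Remark~\ref{brouwer}, so with your argument that part of the paper rests on one topological input instead of two.

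The case analysis at the fixed point is sound. If $F_1(u_*,v_*)>0$, then $u_*+\lambda F_1(u_*,v_*)>u_*\ge a$, so the clamp can return $u_*$ only by being active at the upper end, which forces $u_*=b$. The edge condition $F_1(b,\cdot)\le 0$ excludes this, and the negative case and the $F_2$ cases are symmetric. The non-strict edge inequalities cause no trouble, and degenerate intervals such as $a=b$ are covered as well.

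Nothing in your proof is specific to dimension $2$: clamping coordinatewise onto a box in $\mathbb{R}^n$ gives the general Poincar\'e--Miranda theorem verbatim, and $\lambda=1$ already suffices. The paper's citation buys brevity; your route buys self-containment at essentially no extra length.
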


Next, we give a counterexample that $\Phi(X)=0$ does not have a unique solution.

\begin{remark}\label{notuniq}
Let
$S = \begin{pmatrix} 1&0 \\[2pt] 0&-1 \end{pmatrix}, \
T = \begin{pmatrix} 0&1 \\[2pt] 1&0 \end{pmatrix}.$
Fix
$c := \frac{19}{10},$
$X_0 := e^{3S} = \begin{pmatrix} e^3&0 \\[2pt] 0&e^{-3} \end{pmatrix}.$
Define
\[
Z_1 := 3S, \qquad Z_2 := cT, \qquad Z_3 := -Z_{1}-Z_{2},
\]
and then set
\[
B_i := e^{Z_i}, \qquad A_i := B_i X_0 B_i \qquad (i=1,2,3).
\]
We consider $\displaystyle \Phi(X) := \frac{1}{3} \sum_{i=1}^3 \log(A_i \sharp X^{-1})$. Since each \(B_i\) is positive definite, the uniqueness of a positive definite solution to the Riccati equation gives
\[
A_i \sharp X_0^{-1} = B_i \qquad (i=1,2,3).
\]
Consequently,
\[
\Phi(X_0)
= \frac13 \sum_{i=1}^3 \log(A_i \sharp X_0^{-1})
= \frac13 \sum_{i=1}^3 \log(B_i)
= \frac13 \sum_{i=1}^3 Z_i
= 0.
\]
Thus, \(X_0\) is a solution.
We have
\[
\det A_i = \det(B_i)^2 \det(X_0) = 1 \qquad (i=1,2,3),
\]
because \(\det(B_i) = e^{\tr Z_i} = 1\) and \(\det(X_0) = 1\).
Observe that if \(X \in \PP_2\) is a solution of $\Phi(X) = 0$, then \(\det X = 1\).
Therefore, every solution of $\Phi(X)=0$ belongs to the manifold $\{X \in \PP_2: \det X = 1\}$.
Indeed, we can parametrize this manifold explicitly. For \(u,v\in \mathbb{R}\), set
\[
H(u,v) := uS + vT, \qquad
X(u,v) := e^{H(u,v)}.
\]
%Since \(H(u,v)^2=(u^2+v^2)I\), writing
%\[
%\rho:=\sqrt{u^2+v^2},
%\]
%we have
%\[
%X(u,v)
%=
%\cosh \rho\, I+\frac{\sinh\rho}{\rho}(uS+vT).
%\]
Then $H(u,v) \in \mathbb{H}_{2}, \ X(u,v) \in \PP_{2}$ and \(\det X(u,v) = e^{\tr H(u,v)} = 1\).
Conversely, if a real matrix \(X \in \PP_2\) satisfies \(\det X = 1\), then \(Y := \log X\) is real symmetric and $\tr Y = \log \det X = 0$.
Thus, \(Y = u_{0} S + v_{0} T\) for some \(u_{0}, v_{0} \in \mathbb{R}\), and therefore, \(X = X(u_{0}, v_{0})\). Hence every solution of \(\Phi(X) = 0\) can be written in the form \(X(u,v)\).

By \cite[Proposition 4.1.12]{Bh},
\[
G_i(u,v) := A_i \sharp X(u,v)^{-1}
= \frac{A_i + X(u,v)^{-1}}{\sqrt{\det(A_i + X(u,v)^{-1})}}
\qquad (i=1,2,3).
\]
Write
\[
G_i(u,v)=
\begin{pmatrix}
a_i(u,v) & b_i(u,v)\\[2pt]
b_i(u,v) & d_i(u,v)
\end{pmatrix},
\qquad i=1,2,3.
\]
Since \(G_i(u,v) \in \PP_2\) and \(\det G_i(u,v) = 1\), Lemma~\ref{lem:log-formula} yields
\[
\log G_i(u,v) = \theta_i(u,v)
\begin{pmatrix}
\dfrac{a_i(u,v)-d_i(u,v)}{2} & b_i(u,v)\\[8pt]
b_i(u,v) & -\dfrac{a_i(u,v)-d_i(u,v)}{2}
\end{pmatrix},
\]
where
\[
\theta_i(u,v)
= \frac{\arccosh(t_i(u,v))}{\sqrt{t_i(u,v)^2-1}},
\qquad
t_i(u,v) = \frac{a_i(u,v)+d_i(u,v)}{2},
\]
with the continuous interpretation \(\theta_i(u,v) = 1\) whenever \(G_i(u,v) = I\).
Equivalently,
\[
\log G_i(u,v)
= \theta_i(u,v) \left[ \frac{a_i(u,v)-d_i(u,v)}{2} S + b_i(u,v) T \right].
\]
Hence,
\[
\Phi \bigl( X(u,v) \bigr)
= F_1(u,v) S + F_2(u,v) T,
\]
where
\[
F_1(u,v) := \frac{1}{3} \sum_{i=1}^3 \theta_i(u,v) \, \frac{a_i(u,v)-d_i(u,v)}{2}
\quad \textrm{and} \quad
F_2(u,v) := \frac{1}{3} \sum_{i=1}^3 \theta_i(u,v) \, b_i(u,v).
\]
Therefore,
\[
\Phi \bigl( X(u,v) \bigr) = 0
\qquad \Longleftrightarrow \qquad
F_1(u,v) = F_2(u,v) = 0.
\]

Since $X_{0} = X(3,0)$, we now show that \(F_1(u,v) = F_2(u,v) = 0\) has another solution distinct from \((3,0)\).  The following numerical computation is given by MATLAB.
Let
\[
R = [u_-,u_+] \times [v_-,v_+],
\]
where
\[
u_-:=1.61247432825,
\qquad
u_+:=1.62347432825,
\]
and
\[
v_-:=0.5194188906,
\qquad
v_+:=0.5254188906.
\]
The following bounds hold:
\begin{align*}
F_1(u_-,v)&\ge 1.0174896754\times 10^{-4}>0
&&\text{for all } v\in[v_-,v_+],\\
F_1(u_+,v)&\le -9.2173535435\times 10^{-5}<0
&&\text{for all } v\in[v_-,v_+],\\
F_2(u,v_-)&\ge 9.9667442521\times 10^{-6}>0
&&\text{for all } u\in[u_-,u_+],\\
F_2(u,v_+)&\le -4.7891817896\times 10^{-6}<0
&&\text{for all } u\in[u_-,u_+].
\end{align*}
By Poincar\'e--Miranda theorem, there exists $(u_*,v_*) \in R$
such that $F_1(u_*,v_*) = F_2(u_*,v_*) = 0$.
Therefore,
\[
X_* := X(u_*,v_*) = e^{u_* S + v_* T}
\]
is another solution of \(\Phi(X) = 0\).
Since \((3,0) \notin R\), we have \(X_* \neq X_0\).  Therefore, the equation
$\Phi(X) = 0$ has at least two distinct solutions in \(\PP_2\).

Thus,
the local uniqueness theorem near the identity cannot be extended to a global uniqueness.
\end{remark}

\begin{remark}
The equation \eqref{phi} is introduced as an attempt to define a
multivariable spectral geometric mean. However, as shown above, in the multi-variable case the equation \eqref{phi} need not have a unique solution. Therefore, this construction
does not define a multivariable mean in full generality.
Nevertheless, the solutions of \eqref{phi} possess several
properties analogous to those of the two-variable spectral geometric mean.
These properties are studied in the next section.
\end{remark}

%\begin{remark}
%A numerical computation gives an approximate second solution inside the rectangle \(R\)
%\[
%(u_*,v_*) \approx (1.61797432825,\ 0.52241889060),
%\]
%which yields
%\[
%X_*\approx
%\begin{pmatrix}
%5.34715939657 & 0.81310419123\\[2pt]
%0.81310419123 & 0.31065810883
%\end{pmatrix}.
%\]
%\end{remark}

\begin{remark}[Necessity of the order-preserving assumption]\label{rem:order-preserving-necessary}
Gaubert and Qu \cite{GQ} characterized the exponential contraction rate of
order-preserving flows in Thompson's metric. In particular, for an
order-preserving flow of $\psi$ on a cone $\mathbb{P}$, the best contraction rate
is given by
\[
\alpha(\psi)
=
\sup\left\{
\alpha\in\mathbb{R}:
D\psi(x)[x]-\psi(x)\leq -\alpha x
\quad \text{for all } x\in \mathbb{P}
\right\}.
\]
Equivalently,
\[
\alpha(\psi)
=
\sup\left\{
\alpha\in\mathbb{R}:
D\left(X^{-1/2}\psi(X)X^{-1/2}\right)[X]
\leq -\alpha I
\quad \text{for all } X\in \mathbb{P}
\right\}.
\]
Consider
\[
\Psi(X)
=
\sum_{i=1}^{m}
w_i X^{-1/2}\log(A_i\sharp X^{-1})X^{-1/2}.
\]
For $F(X) := X^{-1/2} \Psi(X) X^{-1/2}$, we have
\[
DF(X)[X]=-\frac12 I .
\]
Thus, the above differential condition is satisfied with $\alpha=1/2$.

Nevertheless, $\Psi$ does not generate an order-preserving flow. Moreover, as
shown in Remark~\ref{notuniq}, the equation $\Psi(X)=0$ may have more than one
solution. Therefore, the flow generated by $\Psi$ cannot be exponentially
contractive.
This example shows that
the order-preserving property is necessary.
\end{remark}

\section{ Properties and Relationships with other least squares means }

Note that $\Gamma: \Delta_{m} \times \mathbb{P}_{n}^{m} \to \mathbb{P}_{n}$ defined by
\begin{displaymath}
\Gamma(\omega; \mathbb{A}) = \{ X \in \mathbb{P}_{n}: \Psi(X) = 0\}
\end{displaymath}
is a set-valued function satisfying that $\Gamma(\omega; \mathbb{A}) \neq \emptyset$ from  Remark \ref{brouwer}. Indeed,
\begin{equation} \label{E:set-valued}
\begin{split}
X \in \Gamma(\omega; \mathbb{A}) & \quad \Leftrightarrow \quad \Lambda(\omega; A_{1} \sharp X^{-1}, \dots, A_{m} \sharp X^{-1}) = I \\
& \quad \Leftrightarrow \quad \Lambda(\omega; A_{1}^{-1} \sharp X, \dots, A_{m}^{-1} \sharp X) = I
\end{split}
\end{equation}
by self-duality of the Karcher mean in (P8). 
Moreover, $\Gamma(1-t,t; A, B) = \{ A \natural_{t} B \}$ for $t \in [0,1]$ and $A, B \in \mathbb{P}_{n}$.

For given $\omega = (w_1, \dots, w_m) \in \Delta_{m}$ and $\mathbb{A}= (A_1, \ldots A_m) \in \mathbb{P}_n^m$ we denote
\begin{displaymath}
\begin{split}
\omega^k & := \frac{1}{k} (w_1, \dots, w_m, \cdots, w_1, \dots, w_m) \in \Delta_{mk}, \\
\mathbb{A}^{k} & := (A_{1}, \dots, A_{m}, \cdots, A_{1}, \dots, A_{m}) \in \mathbb{P}_{n}^{mk}
\end{split}
\end{displaymath}
for any $k \in \mathbb{N}$, and
\begin{displaymath}
c \Gamma(\omega; \mathbb{A}) := \{ c X: X \in \Gamma(\omega; \mathbb{A}) \}, \quad
S \Gamma(\omega; \mathbb{A}) S^{*} := \{ S X S^{*}: X \in \Gamma(\omega; \mathbb{A}) \}
\end{displaymath}
for any $c > 0$ and invertible matrix $S$. 
Applying the properties of Karcher mean in Theorem \ref{T:properties} to \eqref{E:set-valued}, we obtain

\begin{theorem} \label{T:multi-sp-geomean}
Let $\omega = (w_1, \dots, w_m) \in \Delta_{m}$ and $\mathbb{A}= (A_1, \ldots A_m) \in \mathbb{P}_n^m$.
\begin{enumerate}
\item $\Gamma(\omega ; \mathbb{A}) \supseteq \{ A_1^{w_1} A_2^{w_2} \cdots A_m^{w_m} \}$ if $A_i$'s commute;

\item $\Gamma(\omega ; \alpha \mathbb{A}) = \alpha \Gamma(\omega ; \mathbb{A})$ for any $\alpha > 0$;

\item $\Gamma(\omega_\sigma ; \mathbb{A}_{\sigma}) = \Gamma(\omega ; \mathbb{A})$ for any permutation $\sigma$ on $m$-letters;

\item $\Gamma(\omega^k ; \mathbb{A}^k )= \Gamma(\omega ; \mathbb{A})$ for any $k \in \mathbb{N}$;

\item $\Gamma(\omega ; U \mathbb{A} U^*) = U \Gamma(\omega ; \mathbb{A}) U^*$ for any unitary matrix $U$;

\item $  \Gamma(\omega ; \mathbb{A}^{-1}) := \{ X^{-1}: X \in \Gamma(\omega; \mathbb{A}) \}. $

\item $\det \Gamma(\omega ; \mathbb{A}) = \{ (\det A_{1})^{w_{1}} (\det A_{2})^{w_{2}} \cdots (\det A_{m})^{w_{m}} \}$, where $$ \det \Gamma(\omega ; \mathbb{A}) := \{ \det X: X \in \Gamma(\omega; \mathbb{A}) \}. $$
\end{enumerate}
\end{theorem}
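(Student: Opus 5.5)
Each item follows from the characterization \eqref{E:set-valued}, namely $X\in\Gamma(\omega;\mathbb{A})$ if and only if $\Lambda(\omega;A_1\sharp X^{-1},\dots,A_m\sharp X^{-1})=I$, equivalently $\Phi(X)=\sum_i w_i\log(A_i\sharp X^{-1})=0$. For each item we transform a candidate $X$ and check that the transformed point satisfies the defining equation for the transformed data, using the corresponding property in Theorem~\ref{T:properties}. Where the map is invertible, the reverse inclusion comes from applying the same argument to the inverse transformation.

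\emph{Item (1).} If the $A_i$ commute, put $X=\prod_i A_i^{w_i}$. Then $X$ commutes with every $A_i$, so $A_i\sharp X^{-1}=(A_iX^{-1})^{1/2}$, and these matrices commute with one another. Hence
\[
\Phi(X)=\tfrac12\sum_i w_i(\log A_i-\log X)=0,
\]
so $X\in\Gamma(\omega;\mathbb{A})$.

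\emph{Item (2).} We have $(\alpha A_i)\sharp(\alpha X)^{-1}=A_i\sharp X^{-1}$, since the metric geometric mean is jointly homogeneous of degree one, i.e.\ $(\alpha A)\sharp(\alpha^{-1}B)=A\sharp B$. So $X\in\Gamma(\omega;\mathbb{A})$ if and only if $\alpha X\in\Gamma(\omega;\alpha\mathbb{A})$.

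\emph{Items (3) and (4).} Item (3) follows directly from (P3). For item (4), note that $\Phi$ computed with $(\omega^k,\mathbb{A}^k)$ is literally the same sum as $\Phi$ with $(\omega,\mathbb{A})$: each term appears $k$ times with weight $w_i/k$.

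\emph{Item (5).} Use $U(A\sharp B)U^*=UAU^*\sharp UBU^*$ together with $(UXU^*)^{-1}=UX^{-1}U^*$. These give $UA_iU^*\sharp(UXU^*)^{-1}=U(A_i\sharp X^{-1})U^*$. By (P6), $\Lambda$ of these matrices equals $U\,\Lambda(\omega;A_1\sharp X^{-1},\dots,A_m\sharp X^{-1})\,U^*$, and this is $UIU^*=I$ when $X\in\Gamma(\omega;\mathbb{A})$.

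Unitarity is essential here. For a general invertible $S$, the matrix $(SXS^*)^{-1}$ is not congruent to $X^{-1}$ by $S$, which explains why only unitaries appear in the statement.

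\emph{Item (6).} This uses the second line of \eqref{E:set-valued}. The condition $X^{-1}\in\Gamma(\omega;\mathbb{A}^{-1})$ reads $\Lambda(\omega;A_1^{-1}\sharp X,\dots,A_m^{-1}\sharp X)=I$, which is equivalent to $X\in\Gamma(\omega;\mathbb{A})$.

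\emph{Item (7).} This is the one step with real content, though it is still short. Take $\det$ of the Karcher equation and use (P9):
\[
1=\prod_i\det(A_i\sharp X^{-1})^{w_i}.
\]
Since $\det(A\sharp B)=(\det A\det B)^{1/2}$, this becomes $\prod_i(\det A_i)^{w_i/2}(\det X)^{-1/2}=1$, so $\det X=\prod_i(\det A_i)^{w_i}$ for every solution. Equality of the sets, rather than mere inclusion, needs $\Gamma(\omega;\mathbb{A})\neq\emptyset$, which is Remark~\ref{brouwer}.

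The only real care points are the equality of sets in (2), (5), (6) and (7), which need invertibility of the transformation and nonemptiness of $\Gamma$, and the commutation bookkeeping in (1). Neither presents a serious obstacle.
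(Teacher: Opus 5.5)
Your proposal is correct and follows the paper's route: each item comes from the characterization \eqref{E:set-valued} (or its logarithmic form $\Phi$) combined with the matching Karcher-mean property (P1), (P3), (P6), (P8) or (P9), with Remark~\ref{brouwer} supplying the nonemptiness needed for the set equality in (7). One cosmetic point: the property behind (2) is joint homogeneity, $(aA)\sharp(bB)=\sqrt{ab}\,(A\sharp B)$, rather than homogeneity of degree one, but the identity $(\alpha A)\sharp(\alpha^{-1}B)=A\sharp B$ that you actually use is correct.
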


% \begin{proof}
% Properties (3) and (4) follow from the definition of $\Gamma(\omega ; \mathbb{A})$.
% \begin{itemize}
%   \item[(1)] Assume that $A_{i}$'s commute. 
% Then $Z := A_1^{w_1} \cdots A_m^{w_m}$ also commutes with each $A_{i}$. 
% By consistency with scalars of the Karcher mean in (P1)
% \begin{displaymath}
% \Lambda(\omega; A_{1}^{-1} \sharp Z, \dots, A_{m}^{-1} \sharp Z) 
% = \prod_{j=1}^{m} (A_{j}^{-1} \sharp Z)^{w_{j}} 
% = Z^{1/2} \prod_{j=1}^{m} A_{j}^{-w_{j}/2} = I.
% \end{displaymath}
% Thus, $Z \in \Gamma(\omega ; \mathbb{A})$.

%   \item[(2)] Let $X \in \Gamma(\omega ; \mathbb{A})$. 
% Since $A_{i} \sharp X^{-1} = (\alpha A_{i}) \sharp (\alpha X)^{-1}$ for any $\alpha > 0$,
% \begin{displaymath}
% I = \Lambda(\omega; A_{1} \sharp X^{-1}, \dots, A_{m} \sharp X^{-1}) 
% = \Lambda(\omega; (\alpha A_{1}) \sharp (\alpha X)^{-1}, \dots, (\alpha A_{m}) \sharp (\alpha X)^{-1}).
% \end{displaymath}
% This means that $\alpha X \in \Gamma(\omega ; \alpha \mathbb{A})$. Conversely, let $Y \in \Gamma(\omega ; \alpha \mathbb{A})$. Then
% \begin{displaymath}
% I = \Lambda(\omega; (\alpha A_{1}) \sharp Y^{-1}, \dots, (\alpha A_{m}) \sharp Y^{-1}) 
% = \Lambda(\omega; A_{1} \sharp (\alpha^{-1} Y)^{-1}, \dots, A_{m} \sharp (\alpha^{-1} Y)^{-1}).
% \end{displaymath}
% This means that $\alpha^{-1} Y \in \Gamma(\omega ; \mathbb{A})$, that is, $Y \in \alpha \Gamma(\omega ; \mathbb{A})$.
% \end{itemize}
% \end{proof}

\begin{remark}\label{R:spectral}
For the $k$-th antisymmetric tensor power $\wedge^{k}$, note that
\begin{displaymath}
\wedge^{k}\Gamma(\omega; \mathbb{A}) := \{\wedge^{k}X: X \in \Gamma(\omega;\mathbb{A})\} \subset \Gamma(\omega; \wedge^{k}\mathbb{A}),
\end{displaymath}
where $\wedge^{k}\mathbb{A} := (\wedge^{k} A_{1}, \dots, \wedge^{k} A_{m})$.
% Note that $\mathcal{S}(\omega; \mathbb{A})$ is preserved by the $k$-th antisymmetric tensor power $\wedge^{k}$.
Indeed, let $X = \mathcal{S}(\omega; \mathbb{A})$. Taking $\wedge^{k}$ to both sides of \eqref{E:n-equation}
\begin{displaymath}
\begin{split}
I = \wedge^{k} I & = \wedge^{k} \Lambda(\omega; X^{-1} \sharp A_{1}, \dots, X^{-1} \sharp A_{m}) \\
& = \Lambda(\omega; \wedge^{k} (X^{-1} \sharp A_{1}), \dots, \wedge^{k} (X^{-1} \sharp A_{m})) \\
& = \Lambda(\omega; (\wedge^{k} X)^{-1} \sharp (\wedge^{k} A_{1}), \dots, (\wedge^{k} X)^{-1} \sharp (\wedge^{k} A_{m})).
\end{split}
\end{displaymath}
So $\wedge^{k} X \in \Gamma(\omega; \wedge^{k}\mathbb{A})$.
See more details and properties about the antisymmetric tensor power in \cite[Section 2]{JK}.
\end{remark}

In the following we denote as $\mathcal{S}(\omega ; \mathbb{A})$ arbitrary element in $\Gamma(\omega; \mathbb{A})$ and show its properties. 
\begin{theorem} \label{T:bounds}
Let $\mathbb{A} = (A_{1}, \dots, A_{m}) \in \mathbb{P}_{n}^{m}$ and $\omega \in \Delta_{m}$. Then
\begin{displaymath}
2I - \sum_{i=1}^{m} w_{i} A_{i}^{-1} \leq \mathcal{S}(\omega; \mathbb{A}) \leq \left[ 2I - \sum_{i=1}^{m} w_{i} A_{i} \right]^{-1}.
\end{displaymath}
\end{theorem}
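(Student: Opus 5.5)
Let $X = \mathcal{S}(\omega;\mathbb{A}) \in \Gamma(\omega;\mathbb{A})$. The plan is to use the AGH inequality (P10) for the Karcher mean in both of the equivalent forms of \eqref{E:set-valued}. Each form gives a bound on the geometric means $A_i \sharp X^{-1}$. We then turn these into bounds on $X$ via the elementary two-variable inequality $A \sharp B \le \frac{1}{2}(A+B)$ and its harmonic counterpart $A\sharp B \ge 2(A^{-1}+B^{-1})^{-1}$.

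\emph{Lower bound.} From $\Lambda(\omega; A_1^{-1}\sharp X, \dots, A_m^{-1}\sharp X) = I$ and the upper half of (P10) we get
\[
I \le \sum_{i=1}^m w_i\, (A_i^{-1}\sharp X).
\]
The arithmetic--geometric mean inequality gives $A_i^{-1}\sharp X \le \frac12 (A_i^{-1} + X)$ for each $i$. Summing with weights and using $\sum w_i = 1$ yields
\[
I \le \frac12 \sum_i w_i A_i^{-1} + \frac12 X,
\]
which rearranges to $X \ge 2I - \sum_i w_i A_i^{-1}$.

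\emph{Upper bound.} Apply the same argument to the first form, $\Lambda(\omega; A_1\sharp X^{-1},\dots,A_m\sharp X^{-1}) = I$. We obtain
\[
I \le \sum_i w_i (A_i \sharp X^{-1}) \le \frac12 \sum_i w_i A_i + \frac12 X^{-1},
\]
so $X^{-1} \ge 2I - \sum_i w_i A_i$. When $2I - \sum_i w_i A_i$ is positive definite, operator monotonicity of inversion (order reversal on $\mathbb{P}_n$) gives $X \le \bigl[2I - \sum_i w_i A_i\bigr]^{-1}$. Equivalently, this step is the lower bound applied to $\mathbb{A}^{-1}$ via Theorem~\ref{T:multi-sp-geomean}(6), followed by inversion.

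The only delicate point is the interpretation of the upper bound when $2I - \sum w_i A_i$ is not positive definite. In that case the right-hand inverse is either undefined or not an upper bound in $\mathbb{P}_n$. I would state that the upper inequality is meant under the hypothesis $\sum_i w_i A_i < 2I$. Note that the derived inequality $X^{-1} \ge 2I - \sum_i w_i A_i$ holds without any hypothesis. The rest is routine, and (P10) holds unconditionally, so no other obstacle arises.
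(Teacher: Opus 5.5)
Your proof is correct and is essentially the paper's argument, namely (P10) applied to the defining Karcher equation followed by the two-variable arithmetic--geometric mean inequality; the paper gets the lower bound from the harmonic half of (P10) on $\Lambda(\omega; X^{-1}\sharp A_1,\dots,X^{-1}\sharp A_m)=I$, whereas you use the arithmetic half on the self-dual form, and both yield the same inequality $\sum_i w_i (X\sharp A_i^{-1}) \ge I$. Your caveat is well founded: the paper just says ``solving for $X$'', but the inversion step genuinely needs $\sum_i w_i A_i < 2I$ (e.g.\ for $A_1=\cdots=A_m=3I$, where $3I\in\Gamma(\omega;\mathbb{A})$, the stated upper bound would read $3I\le -I$), so only $X^{-1}\ge 2I-\sum_i w_i A_i$ holds unconditionally.
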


\begin{proof}
Let $X = \mathcal{S}(\omega; \mathbb{A})$. By the weighted arithmetic-$\Lambda$-harmonic mean inequalities (P10)
\begin{displaymath}
\left[ \sum_{i=1}^{m} w_{i} (X \sharp A_{i}^{-1}) \right]^{-1} \leq \Lambda(\omega; X^{-1} \sharp A_{1}, \dots, X^{-1} \sharp A_{m}) = I \leq \sum_{i=1}^{m} w_{i} (X^{-1} \sharp A_{i}).
\end{displaymath}
By using the two-variable arithmetic-geometric mean inequality, the above inequalities imply
\begin{displaymath}
\left[ \frac{X + \sum_{i=1}^{m} w_{i} A_{i}^{-1}}{2} \right]^{-1} \leq I \leq \frac{X^{-1} + \sum_{i=1}^{m} w_{i} A_{i}}{2}.
\end{displaymath}
Solving the above for $X$, we obtain the desired inequalities.
\end{proof}

The bounds in Theorem \ref{T:bounds} are the same as those of Wasserstein mean in \cite[Theorem 3.4]{HK19}. Thus, every element in $\Gamma(\omega; \mathbb{A})$ satisfies the extended Lie-Trotter formula.
\begin{theorem}
Let $\mathbb{A} = (A_{1}, \dots, A_{m}) \in \mathbb{P}_{n}^{m}$ and $\omega \in \Delta_{m}$. Then
\begin{displaymath}
\lim_{s \to 0} \mathcal{S}(\omega; \mathbb{A}^{s})^{1/s} = \mathrm{LE}(\omega; \mathbb{A}).
\end{displaymath}
\end{theorem}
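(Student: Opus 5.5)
The plan is to squeeze $\mathcal{S}(\omega;\mathbb{A}^{s})$ between the two bounds of Theorem \ref{T:bounds} and then pass to the limit through the known Lie--Trotter behaviour of those bounds. The difficulty is that Loewner inequalities are not preserved by the power map $X \mapsto X^{1/s}$. So we cannot simply raise the sandwich to the power $1/s$. Instead we compare logarithms to first order in $s$, i.e. we use that $\log$ is operator monotone and control the error terms.

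\emph{Step 1 (first-order bounds).} Fix $X_s \in \Gamma(\omega;\mathbb{A}^{s})$ for each small $s>0$. Set
\[
L_s := 2I - \sum_i w_i A_i^{-s}, \qquad U_s := \Big[2I - \sum_i w_i A_i^{s}\Big]^{-1}.
\]
Theorem \ref{T:bounds} gives $L_s \le X_s \le U_s$. Since $A_i^{\pm s} = I \pm s\log A_i + O(s^2)$, we get
\[
L_s = I + sL + O(s^2), \qquad U_s = I + sL + O(s^2), \qquad L := \sum_i w_i\log A_i .
\]
Because $\log$ is operator monotone, $\log L_s \le \log X_s \le \log U_s$. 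Hence $\log X_s = sL + R_s$, where $R_s$ is Hermitian and squeezed between two $O(s^2)$ Hermitian matrices. Therefore $\|R_s\| = O(s^2)$.

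\emph{Step 2 (conclusion).} We have $X_s^{1/s} = \exp(s^{-1}\log X_s) = \exp(L + s^{-1}R_s)$. Since $s^{-1}R_s \to 0$, this converges to $e^{L} = \mathrm{LE}(\omega;\mathbb{A})$. The estimate is uniform over the choice of $X_s \in \Gamma(\omega;\mathbb{A}^s)$, so every selection converges.

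\emph{Step 3 (negative $s$).} For $s<0$, write $s=-r$. Then $\mathbb{A}^{s} = (\mathbb{A}^{-1})^{r}$, and property (6) of Theorem \ref{T:multi-sp-geomean} gives $\Gamma(\omega;\mathbb{A}^{s}) = \Gamma(\omega;\mathbb{A}^{r})^{-1}$. Thus $\mathcal{S}(\omega;\mathbb{A}^{s})^{1/s} = Y_r^{1/r}$ for some $Y_r \in \Gamma(\omega;\mathbb{A}^{r})$, reducing to Step 2. Alternatively, the same expansion argument works directly for negative $s$.

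\emph{Main obstacle.} The key point is the squeeze in Step 1: the upper and lower bounds must agree to first order, so that the remainder is genuinely $O(s^2)$ in norm. For Hermitian matrices, $-C s^2 I \le R_s \le C s^2 I$ implies $\|R_s\| \le C s^2$, which settles it.

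For the validity of $U_s$ and the logarithms, note that $2I - \sum_i w_i A_i^{s} > 0$ for small $|s|$. This matches how \cite[Theorem 3.4]{HK19} handles the Wasserstein mean.
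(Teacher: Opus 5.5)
Your proposal is correct and takes essentially the same route as the paper. The paper deduces the formula by noting that the bounds of Theorem \ref{T:bounds} coincide with those used for the Wasserstein mean in \cite[Theorem 3.4]{HK19}, so $\mathcal{S}(\omega;\mathbb{A}^{s})$ is squeezed between two curves that agree with $I + s\sum_i w_i \log A_i$ to first order at $s=0$; you write out the first-order estimate that the paper leaves to that citation, uniformly over all elements of $\Gamma(\omega;\mathbb{A}^{s})$ and for both signs of $s$.
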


\begin{lemma}\label{lem:two-sided-log-majorization}
Let \( G: \Delta_{m} \times \mathbb{P}_{n}^{m} \to \mathbb{P}_{n} \) be a matrix mean satisfying the homogeneity and invariance under the $k$-th antisymmetric tensor power for all $k \geq 1$.
Let \( \Theta(\omega;\mathbb{A})\subset \mathbb{P}_{n} \) be a nonempty set satisfying
\begin{equation} \label{E:homogeneity}
\Theta(\omega; c \mathbb{A}) = c \, \Theta(\omega;\mathbb{A})
:= \{ cY: Y \in \Theta(\omega; \mathbb{A}) \},
\end{equation}
and
\begin{equation} \label{E:antisym-tensor-power}
\wedge^{k} \Theta(\omega; \mathbb{A})
:= \{ \wedge^{k}Y : Y \in \Theta(\omega; \mathbb{A}) \}
\subset \Theta(\omega; \wedge^{k} \mathbb{A}).
\end{equation}
Then the following statements hold:
\begin{enumerate}
\item If $X \le I$ for every \( X \in \Theta(\omega; \mathbb{A}) \) implies $G(\omega; \mathbb{A}) \le I$, then $G(\omega;\mathbb{A}) \prec_{w \log} X$.

\item If $G(\omega; \mathbb{A}) \le I$ implies $X \le I$ for every \( X \in \Theta(\omega;\mathbb{A}) \), then $X \prec_{w \log} G(\omega; \mathbb{A})$.
\end{enumerate}
\end{lemma}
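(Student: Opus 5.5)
We use the standard antisymmetric-tensor-power argument for weak log-majorization. Recall that $Y \prec_{w\log} Z$ means $\prod_{j=1}^{k}\lambda_j(Y) \le \prod_{j=1}^{k}\lambda_j(Z)$ for $k=1,\dots,n$, with eigenvalues in decreasing order. Since $\lambda_1(\wedge^k Y) = \prod_{j=1}^{k}\lambda_j(Y)$, it suffices to prove the largest-eigenvalue inequality $\lambda_1(G(\omega;\mathbb{A})) \le \lambda_1(X)$ in (1), and $\lambda_1(X)\le \lambda_1(G(\omega;\mathbb{A}))$ in (2). We then apply this to $\wedge^k\mathbb{A}$, $\wedge^k X$ and $\wedge^k G(\omega;\mathbb{A})$.

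\emph{Step 1 (the case $k=1$ for (1)).} We interpret the hypothesis of (1) pointwise: for each data tuple $\mathbb{B}$ and each $Y \in \Theta(\omega;\mathbb{B})$, $Y \le I$ forces $G(\omega;\mathbb{B}) \le I$. This is how it will be used, e.g. for $\Theta=\Gamma$, where each solution is treated individually. Fix $X \in \Theta(\omega;\mathbb{A})$ and set $c := \lambda_1(X)^{-1}$. Then $cX \le I$, and by \eqref{E:homogeneity} we have $cX \in \Theta(\omega; c\mathbb{A})$. The hypothesis gives $G(\omega; c\mathbb{A}) \le I$. Homogeneity of $G$ yields $G(\omega;c\mathbb{A}) = cG(\omega;\mathbb{A})$, so $\lambda_1(G(\omega;\mathbb{A})) \le c^{-1} = \lambda_1(X)$.

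\emph{Step 2 (the case $k=1$ for (2)).} Set $c := \lambda_1(G(\omega;\mathbb{A}))^{-1}$. Then $G(\omega;c\mathbb{A}) = cG(\omega;\mathbb{A}) \le I$. Since $cX \in \Theta(\omega;c\mathbb{A})$ by \eqref{E:homogeneity}, the hypothesis gives $cX\le I$, that is, $\lambda_1(X) \le \lambda_1(G(\omega;\mathbb{A}))$.

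\emph{Step 3 (passing to all $k$).} By \eqref{E:antisym-tensor-power}, $\wedge^k X \in \Theta(\omega;\wedge^k\mathbb{A})$. The invariance of $G$ under antisymmetric tensor powers gives $\wedge^k G(\omega;\mathbb{A}) = G(\omega;\wedge^k\mathbb{A})$. Applying Steps 1 and 2 to the tuple $\wedge^k\mathbb{A}$ (now on $\wedge^k\mathbb{C}^n$) and the element $\wedge^k X$ gives $\lambda_1(\wedge^k G(\omega;\mathbb{A}))\le\lambda_1(\wedge^k X)$, or the reverse inequality in case (2). Rewriting each side as a product of the top $k$ eigenvalues gives the weak log-majorization for every $k$.

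The main obstacle is the logical form of the hypothesis. Literally, it quantifies over all $X\in\Theta$, but Step 1 needs a version for a single element. We must therefore read the hypothesis as holding for all tuples and for each element individually, including tuples on the spaces $\wedge^k\mathbb{C}^n$. This requires the family $G,\Theta$ to be defined in all dimensions, and the same implication must hold there, since Step 3 applies it to $\wedge^k\mathbb{A}$. With that reading, the remaining steps are routine. Note that only weak log-majorization is claimed, so no determinant equality is needed.
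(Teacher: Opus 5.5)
Your proof is correct and is essentially the paper's argument. The paper works directly at level $k$: it rescales $\wedge^k X$ (resp.\ $\wedge^k G(\omega;\mathbb{A})$) by its top eigenvalue $\mu_k$ (resp.\ $\nu_k$) and invokes homogeneity of $\Theta$ and $G$ together with $\wedge^k$-invariance, which is your Steps 1--3 merged into one. Your pointwise reading of the hypothesis in (1), applied also to tuples on $\wedge^k\mathbb{C}^n$, is exactly what the paper's proof tacitly uses, and it is the form verified in Theorem~\ref{T:LE-Sp}.
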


\begin{proof}
We first prove (1). Let \(\lambda_{1}(X) \ge \cdots \ge \lambda_{n}(X)\) denote the eigenvalues of \(X \in \mathbb{P}_{n}\).
Fix \(1 \le k \le n\), and define
\[
\mu_{k} := \lambda_{1}(\wedge^{k} X) = \prod_{j=1}^{k} \lambda_{j}(X).
\]
Since \(\wedge^{k} X \in \Theta(\omega; \wedge^{k} \mathbb{A})\) for every \( X \in \Theta(\omega; \mathbb{A}) \) by \eqref{E:antisym-tensor-power} and
\(\Theta\) is homogeneous by \eqref{E:homogeneity}, we have
\[
\mu_{k}^{-1} (\wedge^{k} X)
\in
\Theta(\omega; \mu_{k}^{-1} (\wedge^{k} \mathbb{A})).
\]
Note that $\mu_{k}^{-1} (\wedge^{k} X) \le I$ by the definition of \(\mu_{k}\).
Applying the hypothesis in (1) and homogeneity of \( G \), we obtain
$\mu_{k}^{-1} G(\omega; \wedge^{k} \mathbb{A}) = G(\omega; \mu_{k}^{-1} (\wedge^{k} \mathbb{A})) \le I$.
So $G(\omega; \wedge^{k} \mathbb{A}) \le \mu_{k} I$.
Since $G$ is invariant under the $k$-th antisymmetric tensor power,
\[
\wedge^{k} G(\omega; \mathbb{A}) = G(\omega; \wedge^{k} \mathbb{A}) \le \mu_{k} I,
\]
which implies $\displaystyle \lambda_{1} \bigl(\wedge^{k} G(\omega; \mathbb{A}) \bigr) = \prod_{j=1}^{k} \lambda_{j} \bigl( G(\omega; \mathbb{A}) \bigr) \le \mu_{k}$,
that is, $G(\omega; \mathbb{A}) \prec_{w \log} X$.

Next we prove (2). Fix \(1 \le k \le n\), and define
\[
\nu_{k} := \lambda_{1} \bigl( \wedge^{k} G(\omega; \mathbb{A}) \bigr)
= \prod_{j=1}^{k} \lambda_{j} \bigl( G(\omega; \mathbb{A}) \bigr).
\]
By the homogeneity and \(\wedge^{k}\)-invariance of \( G \),
\[
G(\omega; \nu_{k}^{-1} (\wedge^{k} \mathbb{A}))
= \nu_{k}^{-1} G(\omega; \wedge^{k} \mathbb{A})
= \nu_{k}^{-1} \bigl( \wedge^{k} G(\omega; \mathbb{A}) \bigr)
\le I.
\]
Since \( \wedge^{k} X \in \Theta(\omega; \wedge^{k} \mathbb{A})\) for every \( X \in \Theta(\omega; \mathbb{A}) \) by \eqref{E:antisym-tensor-power} and
\(\Theta\) is homogeneous by \eqref{E:homogeneity}, we have
$\nu_{k}^{-1} (\wedge^{k} X) \in \Theta(\omega; \nu_{k}^{-1} (\wedge^{k} \mathbb{A}))$.
Applying the hypothesis in (2), we obtain $\wedge^{k} X \le \nu_{k} I$, which implies
\[
\lambda_{1} (\wedge^{k} X) \le \nu_{k},
\]
that is, $\displaystyle \prod_{j=1}^{k} \lambda_{j}(X) \le
\prod_{j=1}^{k} \lambda_{j} \bigl( G(\omega; \mathbb{A}) \bigr)$ for all $1 \le k \le n$.
Therefore, $X \prec_{w \log} G(\omega; \mathbb{A})$.
\end{proof}
% From Theorem \ref{T:-spe-epsilon} we have seen that for $\mathbb{A} = (A_{1}, \dots, A_{m}) \in [\epsilon_{\omega}^{-1} I, \epsilon_{\omega} I]^{m}$ for some $\epsilon_{\omega} > 1$, the equation $\Phi(X) = 0$ has a unique positive definite solution, denoted by $\mathcal{S}_{\epsilon}(\omega; \mathbb{A})$. We verify its properties with other least squares means.

Note from \cite{GT24} that $e^{(1-t) \log A + t \log B} \prec_{\log} A \natural_{t} B$ for any $t \in [0,1]$. The following is a multi-variable extension of this result.
\begin{theorem} \label{T:LE-Sp}
Let $\mathbb{A} = (A_{1}, \dots, A_{n}) \in \mathbb{P}_{n}^{m}$ and $\omega \in \Delta_{m}$. Then
\begin{displaymath}
\mathrm{LE}(\omega; \mathbb{A}) \prec_{\log} \mathcal{S}(\omega; \mathbb{A}).
\end{displaymath}
\end{theorem}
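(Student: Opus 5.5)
The plan is to apply Lemma~\ref{lem:two-sided-log-majorization}\,(1) with $G = \mathrm{LE}$ and $\Theta = \Gamma$. This yields the weak log-majorization $\mathrm{LE}(\omega;\mathbb{A}) \prec_{w\log} X$ for every $X \in \Gamma(\omega;\mathbb{A})$. We then upgrade it to $\prec_{\log}$ using the determinantal identity.

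First, check the structural hypotheses of the lemma. The log-Euclidean mean is homogeneous, since $\log(cA_i) = (\log c) I + \log A_i$ and $\sum_i w_i = 1$. It is also invariant under $\wedge^k$. The reason is that $\log(\wedge^k A)$ is the derivation extension of $\log A$, so $\sum_i w_i \log \wedge^k A_i$ is the derivation extension of $\sum_i w_i \log A_i$. Exponentiating a derivation extension gives $\wedge^k$ of the exponential. For $\Theta = \Gamma$, homogeneity \eqref{E:homogeneity} is Theorem~\ref{T:multi-sp-geomean}(2), and the inclusion \eqref{E:antisym-tensor-power} is Remark~\ref{R:spectral}.

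Note that the proof of Lemma~\ref{lem:two-sided-log-majorization}\,(1) works with one fixed $X$. There we apply the hypothesis to the single element $\mu_k^{-1}\wedge^k X$ of $\Theta(\omega;\mu_k^{-1}\wedge^k\mathbb{A})$. So it suffices to prove the following pointwise statement: if $X \in \Gamma(\omega;\mathbb{A})$ and $X \le I$, then $\mathrm{LE}(\omega;\mathbb{A}) \le I$.

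This step is the heart of the matter, and it is short.
\begin{itemize}
\item From $X \le I$ we get $X^{-1} \ge I$.
\item Monotonicity of the geometric mean gives $A_i \sharp X^{-1} \ge A_i \sharp I = A_i^{1/2}$.
\item Since $\log$ is operator monotone, the equation $\Phi(X)=0$ in \eqref{phi} gives
\[
0 = \sum_{i=1}^m w_i \log(A_i \sharp X^{-1}) \ge \sum_{i=1}^m w_i \log A_i^{1/2} = \frac12 \sum_{i=1}^m w_i \log A_i .
\]
\item Hence $\sum_i w_i \log A_i \le 0$, so its exponential $\mathrm{LE}(\omega;\mathbb{A})$ has all eigenvalues at most $1$, i.e.\ $\mathrm{LE}(\omega;\mathbb{A}) \le I$.
\end{itemize}
Running the argument of the lemma with this fixed $X$ then gives, for every $1 \le k \le n$,
\[
\prod_{j=1}^k \lambda_j\bigl(\mathrm{LE}(\omega;\mathbb{A})\bigr) \le \prod_{j=1}^k \lambda_j(X).
\]

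Finally, for $k=n$ we need equality. By Theorem~\ref{T:multi-sp-geomean}(7), $\det X = \prod_i (\det A_i)^{w_i}$. Also $\det \mathrm{LE}(\omega;\mathbb{A}) = \exp\bigl(\tr \sum_i w_i \log A_i\bigr) = \prod_i (\det A_i)^{w_i}$. The two determinants agree, so the weak log-majorization becomes $\mathrm{LE}(\omega;\mathbb{A}) \prec_{\log} \mathcal{S}(\omega;\mathbb{A})$ for an arbitrary element $\mathcal{S}(\omega;\mathbb{A})$ of $\Gamma(\omega;\mathbb{A})$.

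The only delicate point is the one already flagged. The lemma is phrased with ``for every $X \in \Theta$'', but it must be used in its single-element form, because $\Gamma$ need not be a singleton (Remark~\ref{notuniq}).
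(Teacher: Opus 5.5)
Your proof is correct. Its overall architecture matches the paper's: you reduce via Lemma~\ref{lem:two-sided-log-majorization}(1), with the hypotheses on $\Theta=\Gamma$ supplied by Theorem~\ref{T:multi-sp-geomean}(2) and Remark~\ref{R:spectral}, to the implication ``$X\in\Gamma(\omega;\mathbb{A})$ and $X\le I$ $\Rightarrow$ $\mathrm{LE}(\omega;\mathbb{A})\le I$'', and you close with the determinant identity. Like you, the paper fixes an arbitrary $X=\mathcal{S}(\omega;\mathbb{A})$, so it too effectively uses the lemma in its pointwise form. The determinant identity is indeed item (7) of Theorem~\ref{T:multi-sp-geomean}, which you cite correctly; the paper cites it as (6).

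Where you differ is in proving the key implication. The paper stays at the level of Karcher means:
\begin{itemize}
\item From $X^{-1}\sharp A_i\ge A_i^{1/2}$ and L\"owner--Heinz it gets $(X^{-1}\sharp A_i)^p\ge A_i^{p/2}$.
\item It uses that $\Lambda(\omega;B_1,\dots,B_m)=I$ forces $\Lambda(\omega;B_1^p,\dots,B_m^p)=I$.
\item Monotonicity (P4) then gives $\Lambda(\omega;A_1^{p/2},\dots,A_m^{p/2})^{2/p}\le I$ for every $p\in(0,1]$.
\item Finally it lets $p\to 0^+$, using the extended Lie--Trotter formula, to get $\mathrm{LE}(\omega;\mathbb{A})\le I$.
\end{itemize}

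You instead apply operator monotonicity of $\log$ directly to the equivalent equation $\Phi(X)=0$. This gives $\sum_i w_i\log A_i\le 0$ in one line. The paper's power-invariance step is itself just the observation that $\sum_i w_i\log B_i=0$ survives $B_i\mapsto B_i^p$. So your route is a genuine streamlining: it needs neither L\"owner--Heinz, nor (P4), nor a limiting argument. The paper's detour does produce the intermediate bounds $\Lambda(\omega;\mathbb{A}^{p/2})^{2/p}\le I$ for each fixed $p$ as a byproduct, but these are not needed for the theorem.
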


\begin{proof}
We first show that
\begin{center}
$\mathcal{S}(\omega; \mathbb{A}) \leq I$ \quad implies \quad $\mathrm{LE}(\omega; \mathbb{A}) \leq I$.
\end{center}
Assume that $X = \mathcal{S}(\omega; \mathbb{A}) \leq I$. Then $X^{-1} \geq I$, and $X^{-1} \sharp A_{i} \geq I \sharp A_{i} = A_{i}^{1/2}$ for all $i$ by the monotonicity of geometric mean with respect to the Loewner order. So
\begin{displaymath}
(X^{-1} \sharp A_{i})^{p} \geq A_{i}^{p/2}
\end{displaymath}
for $0 < p \leq 1$ by the monotonicity of the $p$-power map. Then
\begin{displaymath}
\begin{split}
I = \Lambda(\omega; X^{-1} \sharp A_{1}, \dots, X^{-1} \sharp A_{n}) & = \Lambda(\omega; (X^{-1} \sharp A_{1})^{p}, \dots, (X^{-1} \sharp A_{n})^{p}) \\
& \geq \Lambda(\omega; A_{1}^{p/2}, \dots, A_{n}^{p/2}),
\end{split}
\end{displaymath}
where the second equality follows from the unique positive definite solution of Karcher equation and the inequality holds due to the monotonicity of Karcher mean.
Hence,
\begin{displaymath}
\Lambda(\omega; A_{1}^{p/2}, \dots, A_{n}^{p/2})^{2/p} \leq I.
\end{displaymath}
Taking limit as $p \to 0^{+}$ we get $\mathrm{LE}(\omega; \mathbb{A}) \leq I$.

From Remark \ref{R:spectral} and Lemma \ref{lem:two-sided-log-majorization} we obtain $\mathrm{LE}(\omega; \mathbb{A}) \prec_{w \log} \mathcal{S}(\omega; \mathbb{A})$.
Since
\begin{displaymath}
\det \mathrm{LE}(\omega; \mathbb{A}) = \exp \left[ \sum_{i=1}^{n} w_{i} \tr (\log A_{i}) \right] = \prod_{i=1}^{n} (\det A_{i})^{w_{i}} = \det \mathcal{S}(\omega; \mathbb{A})
\end{displaymath}
by Theorem \ref{T:multi-sp-geomean} (6), we complete the proof.
\end{proof}

The following gives us bounds for the solution set $\Gamma(\omega; \mathbb{A})$ with respect to near order.
\begin{theorem} \label{T:A-Sp-H}
Let $\mathbb{A} = (A_{1}, \dots, A_{m}) \in \mathbb{P}_{n}^{m}$ and $\omega \in \Delta_{m}$. Then
\begin{displaymath}
\mathcal{H}(\omega; \mathbb{A}) \preceq \mathcal{S}(\omega; \mathbb{A}) \preceq \mathcal{A}(\omega; \mathbb{A}).
\end{displaymath}
\end{theorem}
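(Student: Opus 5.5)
The plan is to work directly from the definition of the near order, $A \preceq B$ if and only if $A^{-1} \sharp B \geq I$, and to combine two facts: the arithmetic–$\Lambda$ inequality (P10), applied to the two equivalent forms of the defining equation in \eqref{E:set-valued}, and the joint concavity of the two-variable geometric mean $\sharp$. Fix $X = \mathcal{S}(\omega; \mathbb{A}) \in \Gamma(\omega; \mathbb{A})$.

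\emph{Upper bound.} For $\mathcal{S}(\omega; \mathbb{A}) \preceq \mathcal{A}(\omega; \mathbb{A})$ we need
\[
X^{-1} \sharp \Big( \sum_{i=1}^{m} w_i A_i \Big) \geq I .
\]
First, by \eqref{E:set-valued}, $\Lambda(\omega; X^{-1} \sharp A_1, \dots, X^{-1} \sharp A_m) = I$. The arithmetic–$\Lambda$ inequality of (P10) then gives $I \leq \sum_{i} w_i (X^{-1} \sharp A_i)$, which is the same step already used in the proof of Theorem \ref{T:bounds}.

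Second, I would invoke the joint concavity of $(P,Q) \mapsto P \sharp Q$, a standard Kubo--Ando property. Applied with the first argument held fixed at $X^{-1}$, it yields
\[
\sum_{i} w_i (X^{-1} \sharp A_i) \leq X^{-1} \sharp \Big( \sum_{i} w_i A_i \Big).
\]
Chaining the two inequalities gives the upper bound.

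\emph{Lower bound.} For $\mathcal{H}(\omega; \mathbb{A}) \preceq \mathcal{S}(\omega; \mathbb{A})$ we need $\mathcal{H}(\omega; \mathbb{A})^{-1} \sharp X \geq I$, where $\mathcal{H}(\omega; \mathbb{A})^{-1} = \sum_{i} w_i A_i^{-1}$. Here I would use the second, self-dual form in \eqref{E:set-valued}, namely $\Lambda(\omega; A_1^{-1} \sharp X, \dots, A_m^{-1} \sharp X) = I$. Then (P10) gives $I \leq \sum_{i} w_i (A_i^{-1} \sharp X)$. Joint concavity of $\sharp$, now with the second argument fixed at $X$, bounds this sum above by $\big(\sum_{i} w_i A_i^{-1}\big) \sharp X$. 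This is exactly the required inequality.

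The only step needing care is the concavity inequality $\sum_{i} w_i (P \sharp Q_i) \leq P \sharp \sum_{i} w_i Q_i$. It is the concavity of the Kubo--Ando mean $\sharp$ in one variable, which follows from the operator concavity of $t \mapsto t^{1/2}$ after the congruence $P^{-1/2}(\cdot)P^{-1/2}$. I would cite it as standard, or derive it in one line from Jensen's operator inequality. Beyond that, the argument is a direct chaining of inequalities, and no other obstacle is expected.
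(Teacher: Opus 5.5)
Your proposal is correct and follows the paper's proof essentially verbatim. For each bound you combine the identity $\Lambda(\omega; X^{-1}\sharp A_1,\dots,X^{-1}\sharp A_m)=I$ (or its self-dual form $\Lambda(\omega; A_1^{-1}\sharp X,\dots,A_m^{-1}\sharp X)=I$) with the arithmetic--$\Lambda$ inequality (P10) and the concavity of $\sharp$ in one argument, exactly as the paper does, and your derivation of that concavity from operator concavity of $t\mapsto t^{1/2}$ after congruence by $P^{-1/2}$ (using symmetry of $\sharp$ for the lower bound) is a valid substitute for the paper's appeal to joint concavity.
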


\begin{proof}
Let $X = \mathcal{S}(\omega; \mathbb{A})$. Then by joint concavity of the two-variable geometric mean and the weighted arithmetic-Karcher mean inequality in (P10)
\begin{displaymath}
\left( \sum_{j=1}^{m} w_{j} A_{j}^{-1} \right) \sharp X \geq \sum_{j=1}^{m} w_{j} (A_{j}^{-1} \sharp X) \geq \Lambda(\omega; A_{1}^{-1} \sharp X, \dots, A_{m}^{-1} \sharp X) = I.
\end{displaymath}
The last equality follows from \eqref{E:set-valued}. So $\displaystyle \left( \sum_{j=1}^{m} w_{j} A_{j}^{-1} \right)^{-1} \preceq X$. 
Similarly, 
\begin{displaymath}
X^{-1} \sharp \left( \sum_{j=1}^{m} w_{j} A_{j} \right) \geq \sum_{j=1}^{m} w_{j} (X^{-1} \sharp A_{j}) \geq \Lambda(\omega; X^{-1} \sharp A_{1}, \dots, X^{-1} \sharp A_{1}) = I.
\end{displaymath}
Thus, $\displaystyle X \preceq \sum_{j=1}^{m} w_{j} A_{j}$.
\end{proof}

\begin{remark}
For $X, Y \in \mathbb{H}_{n}$, $X \leq_{\lambda} Y$ if and only if $\lambda_{j}(X) \leq \lambda_{j}(Y)$ for all $j = 1, \dots, n$, where eigenvalues of $X, Y$ are rearranged in decreasing order: $\lambda_{1}(X) \geq \cdots \geq \lambda_{n}(X)$. We call $\leq_{\lambda}$ the pointwise eigenvalue order.
Since $A \leq B \Rightarrow A \preceq B \Rightarrow A \leq_{\lambda} B \Rightarrow A \prec_{w \log} B$ for $A, B \in \mathbb{P}_{n}$, Theorem \ref{T:A-Sp-H} provides the same boundedness for $\mathcal{S}(\omega; \mathbb{A})$ with respect to the pointwise eigenvalue order and weak log-majorization.
\end{remark}

% \begin{proof}
% Since $\mathcal{H}(\omega; \mathbb{A}) \leq \Lambda(\omega; \mathbb{A}) \prec_{\log} \mathrm{LE}(\omega; \mathbb{A})$, the first inequality follows from Theorem \ref{T:LE-Sp}.
% Since the weighted arithmetic mean is homogeneous, it is enough to prove that
% \begin{center}
% $\mathcal{A}(\omega; \mathbb{A}) \leq I$ \quad implies \quad $\mathcal{S}(\omega; \mathbb{A}) \leq I$.
% \end{center}
% If $\mathcal{A}(\omega; \mathbb{A}) \leq I$ then $\displaystyle 2I - \sum_{i=1}^{m} w_{i} A_{i} \geq I$, and thus, by Theorem \ref{T:bounds}
% \begin{displaymath}
% \mathcal{S}(\omega; \mathbb{A}) \leq \left[ 2I - \sum_{i=1}^{m} w_{i} A_{i} \right]^{-1} \leq I.
% \end{displaymath}
% \end{proof}

\begin{proposition}
Let $\mathbb{A} = (A_{1}, \dots, A_{m}) \in \mathbb{P}_{n}^{m}$ and $\omega \in \Delta_{m}$. If $\mathcal{S}(\omega; \mathbb{A}) \leq I$ then
\begin{displaymath}
\tr \mathcal{S}(\omega; \mathbb{A})^{3} \leq \tr \Omega(\omega; \mathbb{A}).
\end{displaymath}
\end{proposition}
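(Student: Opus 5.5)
The plan is to compare $\tr \mathcal{S}(\omega;\mathbb{A})^{3}$ with one step of the fixed-point iteration for the Wasserstein mean in Lemma \ref{L:Wass-iteration}, started at the point $X=\mathcal{S}(\omega;\mathbb{A})$ itself. That lemma gives $\tr X_{k}\le \tr X_{k+1}\le \tr\Omega(\omega;\mathbb{A})$ along the sequence $X_{k+1}=K(X_{k})$. Reading it for the first iterate (we interpret the index range so that it applies to $X_{1}=K(X_{0})$ for an arbitrary seed $X_{0}$), we get $\tr K(X)\le \tr \Omega(\omega;\mathbb{A})$ for every $X\in\mathbb{P}_{n}$. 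So it suffices to show that $\tr K(X)\ge \tr X^{3}$ when $X\in\Gamma(\omega;\mathbb{A})$ and $X\le I$.

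Set $M:=\sum_{i=1}^{m} w_{i}(X^{1/2}A_{i}X^{1/2})^{1/2}$. We have shown that $X\in\Gamma(\omega;\mathbb{A})$ is equivalent to
\[
\Lambda\bigl(\omega;(X^{1/2}A_{1}X^{1/2})^{1/2},\dots,(X^{1/2}A_{m}X^{1/2})^{1/2}\bigr)=X.
\]
The arithmetic--Karcher inequality (P10) therefore gives $M\ge X>0$ in the Loewner order. Next we write $\tr K(X)=\tr\bigl(X^{-1/2}M^{2}X^{-1/2}\bigr)=\tr(MX^{-1}M)$. Since $X\le I$ we have $X^{-1}\ge I$, hence $MX^{-1}M\ge M^{2}$ and so $\tr K(X)\ge \tr M^{2}$.

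It remains to pass from $M\ge X$ to a trace inequality. Squaring is not operator monotone, so we cannot conclude $M^{2}\ge X^{2}$. Instead we use Weyl's monotonicity principle: $M\ge X\ge0$ implies $\lambda_{j}(M)\ge\lambda_{j}(X)\ge 0$ for every $j$. Hence $\tr M^{2}=\sum_j\lambda_j(M)^{2}\ge\sum_j\lambda_j(X)^{2}=\tr X^{2}$. Finally, $0<X\le I$ means all eigenvalues of $X$ lie in $(0,1]$, so $\tr X^{2}\ge \tr X^{3}$. Chaining these gives
\[
\tr \mathcal{S}(\omega;\mathbb{A})^{3}\le \tr X^{2}\le \tr M^{2}\le \tr K(X)\le \tr\Omega(\omega;\mathbb{A}).
\]

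The main obstacle is the use of Lemma \ref{L:Wass-iteration}. We need the inequality $\tr K(X_{0})\le\tr\Omega(\omega;\mathbb{A})$ for an arbitrary starting point, so we must check that the cited monotonicity statement holds from the first iterate on. In \cite{ABCM} it is proved via $\tr K(A)\le \tr\Omega$ for all $A$, which is what we use. If only $k\ge1$ were available, a fallback would be to apply the bound to $X_{1}=K(X)$ and $X_{2}=K(X_{1})$, using $\tr X_{1}\le\tr X_{2}\le\tr\Omega$. The eigenvalue step $M\ge X\Rightarrow \tr M^{2}\ge\tr X^{2}$ is the one spot where Loewner-order monotonicity must be replaced by the weaker spectral comparison.
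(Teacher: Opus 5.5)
Your argument is correct. It shares the paper's outer frame: (P10) gives $M:=\sum_{i} w_i (X^{1/2}A_iX^{1/2})^{1/2}\ge X$, and Lemma~\ref{L:Wass-iteration} gives $\tr K(X)\le \tr\Omega(\omega;\mathbb{A})$. The middle step, however, is genuinely different.

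The paper stays at the matrix level. It uses that squaring is monotone from the Loewner order to the near order (the Furuta-type inequality $(XM^{2}X)^{1/2}\ge X^{2}$ for $M\ge X$) to get $X^{2}\preceq M^{2}$. Congruence by $X^{-1/2}$ together with $X\le I$ then gives $X^{3}\preceq K(X)$, and the trace bound follows because the near order implies the eigenvalue order.

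You pass to traces at once, via $\tr K(X)=\tr(MX^{-1}M)\ge \tr M^{2}$, and afterwards only need Weyl monotonicity and $\tr X^{2}\ge \tr X^{3}$. This avoids the near-order machinery entirely. Your chain in fact proves the stronger bound $\tr\mathcal{S}(\omega;\mathbb{A})^{2}\le\tr\Omega(\omega;\mathbb{A})$.

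Your idea can be pushed further still. From $M\ge X$ you get $X^{-1}\ge M^{-1}$, hence $MX^{-1}M\ge M$ and $\tr K(X)\ge \tr M\ge \tr X$. So $\tr\mathcal{S}(\omega;\mathbb{A})\le\tr\Omega(\omega;\mathbb{A})$ holds even without the hypothesis $\mathcal{S}(\omega;\mathbb{A})\le I$.

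What the paper's route buys in exchange is a genuine matrix inequality, $\mathcal{S}^{3}\preceq K(\mathcal{S})$ in the near order, in line with its other near-order results.

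Your concern about the index range in Lemma~\ref{L:Wass-iteration} is harmless. Its $k=1$ instance, $\tr X_{1}\le \tr X_{2}\le \tr\Omega(\omega;\mathbb{A})$, already yields $\tr K(X_{0})\le\tr\Omega(\omega;\mathbb{A})$ for every seed $X_{0}$. That is exactly your fallback, and exactly how the paper uses the lemma.
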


\begin{proof}
Assume $X = \mathcal{S}(\omega; \mathbb{A}) \leq I$. Then by the weighted arithmetic-Karcher mean inequality (P10)
\begin{displaymath}
X = \Lambda(\omega; (X^{1/2} A_{1} X^{1/2})^{1/2}, \dots, (X^{1/2} A_{m} X^{1/2})^{1/2}) \leq \sum_{i=1}^{m} w_{i} (X^{1/2} A_{i} X^{1/2})^{1/2}.
\end{displaymath}
By the monotonicity of a square map with respect to the near order,
\begin{displaymath}
X^{2} \preceq \left[ \sum_{i=1}^{m} w_{i} (X^{1/2} A_{i} X^{1/2})^{1/2} \right]^{2}.
\end{displaymath}
Since $X \leq I$,
\begin{displaymath}
X^{3} \preceq X^{-1/2} \left[ \sum_{i=1}^{m} w_{i} (X^{1/2} A_{i} X^{1/2})^{1/2} \right]^{2} X^{-1/2} = K(X).
\end{displaymath}
By Lemma \ref{L:Wass-iteration} we conclude $\tr X^{3} \leq \tr K(X) \leq \tr \Omega(\omega; \mathbb{A})$.
\end{proof}

The Lim--P\'alfia power mean \cite{LP} is a non-commutative analogue of the classical scalar power mean. Let $\mathbb{A} = (A_{1}, \dots, A_{m}) \in \mathbb{P}_{n}^{m}$ and $\omega = (w_{1}, \dots, w_{m}) \in \Delta_{m}$.
The Lim--P\'alfia power mean $P_t(\omega; \mathbb{A})$ of order $t \in (0,1]$ is defined as the unique positive definite solution \( X \) of the equation
\[
X = \sum_{i=1}^m w_i \left( X \sharp_t A_i \right).
\]
For negative orders,
\[
P_t(\omega; \mathbb{A}) = P_{-t}(\omega; \mathbb{A}^{-1})^{-1},
\qquad t \in [-1,0).
\]
The remarkable result of Lim--P\'alfia power mean is that it converges to the Karcher mean monotonically with respect to the Loewner order \cite{LL14}:
\begin{equation} \label{E:Pt-monotone}
\mathcal{H} = P_{-1} \leq P_{-t} \leq P_{-s} \leq P_{0} = \Lambda \leq P_{s} \leq P_{t} \leq P_{1} = \mathcal{A}, \quad 0 < s \leq t \leq 1
\end{equation}
where $\displaystyle P_{0}(\omega; \mathbb{A}) := \lim_{t \to 0} P_{t}(\omega; \mathbb{A})$.

The following provides the relationship between $\mathcal{S}(\omega; \mathbb{A})$ and Lim-P\'alfia's power mean.
\begin{theorem} \label{T:Sp-Power}
Let $\mathbb{A} = (A_{1}, \dots, A_{m}) \in \mathbb{P}_{n}^{m}$ and $\omega \in \Delta_{m}$. If $\mathcal{S}(\omega; \mathbb{A}) \geq I$, then
\begin{displaymath}
\mathcal{S}(\omega; \mathbb{A})^{-1} \leq P_{t}(\omega; \mathbb{A}), \quad t \in [1/2,1].
\end{displaymath}
\end{theorem}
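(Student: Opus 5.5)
The plan is to reduce everything to the case $t = 1/2$ and then use the fixed-point characterization of the Lim--P\'alfia power mean. By the monotonicity \eqref{E:Pt-monotone}, $P_{1/2}(\omega;\mathbb{A}) \leq P_t(\omega;\mathbb{A})$ for every $t \in [1/2,1]$. It therefore suffices to prove
\[
\mathcal{S}(\omega;\mathbb{A})^{-1} \leq P_{1/2}(\omega;\mathbb{A}).
\]

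Let $X = \mathcal{S}(\omega;\mathbb{A}) \geq I$ and put $Z := X^{-1}$, so that $Z \leq I$. The key observation is that $Z$ is a sub-solution of the power mean equation of order $1/2$. Indeed, $Z \sharp_{1/2} A_i = X^{-1} \sharp A_i$. By the weighted arithmetic--Karcher mean inequality (P10) together with \eqref{E:set-valued},
\[
\sum_{i=1}^m w_i (Z \sharp A_i) = \sum_{i=1}^m w_i (X^{-1} \sharp A_i) \geq \Lambda(\omega; X^{-1}\sharp A_1, \dots, X^{-1} \sharp A_m) = I \geq Z.
\]
Writing $f(Y) := \sum_{i=1}^m w_i (Y \sharp_{1/2} A_i)$, this says $Z \leq f(Z)$.

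Next I would invoke the standard facts behind the Lim--P\'alfia construction \cite{LP}. The map $f$ is monotone for the Loewner order, since the geometric mean is monotone in each variable. It is also a strict contraction for the Thompson metric, with constant $1/2$, and its unique fixed point is $P_{1/2}(\omega;\mathbb{A})$. Monotonicity gives $Z \leq f(Z) \leq f^2(Z) \leq \cdots$ by induction. The contraction property gives $f^k(Z) \to P_{1/2}(\omega;\mathbb{A})$. Since the Loewner order is closed, passing to the limit yields $Z \leq P_{1/2}(\omega;\mathbb{A})$, which is the claim. (For $t \in [1/2,1]$ one could equally run the same argument directly with $f_t(Y) = \sum_i w_i (Y \sharp_t A_i)$; this would need $X^{-1}\sharp_t A_i$-type comparisons, so the reduction to $t=1/2$ is simpler.)

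The main obstacle is justifying this iteration step, namely that a sub-solution $Z \leq f(Z)$ lies below the fixed point. This rests on the Thompson-metric contractivity of $f$ from \cite{LP}, which the paper has not restated, so I would cite it explicitly. If that citation is unavailable, the fallback is to build a monotonically increasing sequence bounded above by a multiple of the identity and identify its limit as a fixed point of $f$; uniqueness of that fixed point then again yields $P_{1/2}$. The remaining steps, namely (P10), \eqref{E:set-valued} and \eqref{E:Pt-monotone}, are direct applications of results already stated.
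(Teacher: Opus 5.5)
Your proposal is correct and follows the paper's proof essentially step for step. You show that $\mathcal{S}(\omega;\mathbb{A})^{-1}$ is a sub-solution of $f(Y)=\sum_i w_i (Y\sharp A_i)$ using (P10) and \eqref{E:set-valued}, iterate the Loewner-monotone map $f$ up to its fixed point $P_{1/2}(\omega;\mathbb{A})$, and conclude with the parameter monotonicity \eqref{E:Pt-monotone}; your explicit justification that $f^k(Z)\to P_{1/2}(\omega;\mathbb{A})$, via the Thompson-metric contraction of $f$ with constant $1/2$, supplies a step the paper only asserts.
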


\begin{proof}
Assume that $\mathcal{S}(\omega; \mathbb{A}) \geq I$. Then $X := \mathcal{S}(\omega; \mathbb{A})^{-1} \leq I$, and
\begin{displaymath}
X \leq I = \Lambda(\omega; X \sharp A_{1}, \dots, X \sharp A_{m}) \leq \sum_{i=1}^{m} w_{i} (X \sharp A_{i}) =: f(X)
\end{displaymath}
by the weighted arithmetic-Karcher mean inequality. Since the map $f$ is operator monotone, $X \leq f(X) \leq f^{2}(X) \leq \cdots \leq f^{k}(X)$. Since $f^{k}(X)$ converges to $P_{1/2}(\omega; \mathbb{A})$ as $k \to \infty$, we obtain
\begin{displaymath}
X \leq P_{1/2}(\omega; \mathbb{A}).
\end{displaymath}
Since Lim-Palfia's power mean is monotone on parameter by \eqref{E:Pt-monotone}, we obtain the desired result.
% \begin{displaymath}
% \mathcal{S}(\omega; \mathbb{A}^{-1}) = \mathcal{S}(\omega; \mathbb{A})^{-1} \leq P_{t}(\omega; \mathbb{A}), \quad t \in [1/2,1]
% \end{displaymath}
% when $\mathcal{S}(\omega; \mathbb{A}) \leq I$.
\end{proof}

\noindent \textbf{Open problems:}
\begin{itemize}
\item[(1)] From Theorem \ref{T:LE-Sp} we have shown
\begin{displaymath}
\Lambda(\omega; \mathbb{A}) \prec_{\log} \mathrm{LE}(\omega; \mathbb{A}) \prec_{\log} \mathcal{S}(\omega; \mathbb{A}).
\end{displaymath}
As a multi-variable extension of $A \natural_{t} B \prec_{w \log} A \diamond_{t} B$, it is an open question to prove
\begin{displaymath}
\mathcal{S}(\omega; \mathbb{A}) \prec_{w \log} \Omega (\omega; \mathbb{A}).
\end{displaymath}
The near-order inequality in Theorem \ref{T:A-Sp-H} gives us an affirmative answer for the second weak log-majorization.

\item[(2)] Gan and Tam \cite{GT24} have shown
\begin{displaymath}
(A^{p} \natural_{t} B^{p})^{1/p} \prec_{\log} A \natural_{t} B, \quad 0 < p < 1.
\end{displaymath}
It implies that $(A^{p} \natural_{t} B^{p})^{1/p}$ converges to the log-Euclidean mean decreasingly with respect to the log-majorization. As a multi-variable extension of the preceding result, it is an interesting question whether the following holds:
\begin{displaymath}
\mathcal{S}(\omega; \mathbb{A}^{p})^{1/p} \prec_{\log} \mathcal{S} (\omega; \mathbb{A}), \quad 0 < p < 1.
\end{displaymath}
\end{itemize}

\section{Multivariable alternative means}

P\'alfia \cite{P} generalized the setting of the Karcher equation~\eqref{Karcher-eq} by replacing the specific function \( f(x) = \log(x) \) with an arbitrary operator monotone function \( f \) and by extending the framework from finite sums over \( m \)-tuples of operators to integrals with respect to probability measures supported on the cone \( \mathbb{P} \) of positive definite operators. This generalization extends a large part of the Kubo-Ando theory to the multivariable setting. In this framework, each operator mean can be characterized extrinsically as a unique solution of a generalized Karcher equation of the form:
\begin{equation} \label{generalized Karcher eq}
 \int_{\mathbb{P}} g(X^{-1/2} A X^{-1/2}) \, d\sigma(A) = 0,
\end{equation}
where \( g \in \mathcal{L} := \left\{ g : (0, \infty) \rightarrow \mathbb{R} \, \middle| \, g \text{ is operator monotone},\, g(1) = 0,\, g'(1) = 1 \right\} \), and \( \sigma \) is a Borel probability measure on \( \mathbb{P} \). A unique solution \( \Lambda_f(\sigma) \in \mathbb{P} \) is referred to as a \emph{generalized Karcher mean}.

A systematic framework has recently been developed to study \emph{alternative means} for positive definite operators, which include the weighted spectral geometric mean and the Wasserstein mean. The alternative mean associated to a normalized
operator monotone function $f$ is defined in \cite{DFKC} as
\begin{align*}
 A \hat{\sigma}_f B &= (I \sigma_f (A^{-1} \sharp B))A (I \sigma_f (A^{-1} \sharp B)) \qquad A, B \in \mathbb{P} \\
 &= f (A^{-1} \sharp B) A f (A^{-1} \sharp B).
 \end{align*}
We prove the following theorem for alternative means.

\begin{theorem}
Let \(A,B\in \mathbb{P}\), and let \(\sigma_f\) be an operator mean with strictly monotone representing function \(f\). Then \(A\hat{\sigma}_f B\) is the unique matrix \(X\in \mathbb{P}\) satisfying
\begin{equation}\label{alteq}
(A\sharp X^{-1}) \sigma_f (B\sharp X^{-1}) = I.
\end{equation}
\end{theorem}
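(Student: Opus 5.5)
The plan is to verify directly that $X = A\hat{\sigma}_f B$ solves \eqref{alteq}, and then to show that any solution of \eqref{alteq} must equal it. Both parts rest on the Riccati characterization already used in the paper: for $A, Y \in \mathbb{P}$, $A \sharp Y$ is the unique $Z \in \mathbb{P}$ with $Z A^{-1} Z = Y$. Throughout, set $C := A^{-1} \sharp B$, so that $C A C = B$ by the same characterization.

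\emph{Existence.} Put $X := f(C) A f(C)$, which lies in $\mathbb{P}$ since $f(C) \in \mathbb{P}$.

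First, $Z = f(C)^{-1}$ satisfies $Z A^{-1} Z = f(C)^{-1} A^{-1} f(C)^{-1} = X^{-1}$. Hence $A \sharp X^{-1} = f(C)^{-1}$.

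Second, take $W := C f(C)^{-1}$. This is positive definite because $C$ and $f(C)$ commute. Using $B^{-1} = C^{-1} A^{-1} C^{-1}$, we get
\[
W B^{-1} W = f(C)^{-1} A^{-1} f(C)^{-1} = X^{-1},
\]
so $B \sharp X^{-1} = C f(C)^{-1}$.

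The two matrices $A \sharp X^{-1}$ and $B \sharp X^{-1}$ commute. The Kubo--Ando formula from Theorem \ref{Opm} therefore reduces to functional calculus:
\[
f(C)^{-1} \,\sigma_f\, C f(C)^{-1} = f(C)^{-1} f(C) = I .
\]
So $X$ satisfies \eqref{alteq}.

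\emph{Uniqueness.} Let $X \in \mathbb{P}$ satisfy \eqref{alteq}, and write $P := A \sharp X^{-1}$ and $Q := B \sharp X^{-1}$. The equation $P \sigma_f Q = I$ reads
\[
P^{1/2} f(P^{-1/2} Q P^{-1/2}) P^{1/2} = I, \qquad\text{i.e.}\qquad f(P^{-1/2} Q P^{-1/2}) = P^{-1}.
\]
Since $f$ is strictly monotone, it is injective and has a continuous inverse $f^{-1}$ on its range. The spectrum of $P^{-1}$ lies in that range, because $P^{-1}$ is an image under $f$. Therefore
\[
P^{-1/2} Q P^{-1/2} = f^{-1}(P^{-1}) =: K .
\]
Here $K \in \mathbb{P}$ is a function of $P$, so it commutes with $P$, and we obtain $Q = P K$.

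The Riccati relations give $X^{-1} = P A^{-1} P = Q B^{-1} Q$. Solving the second for $B$ and substituting $Q = PK$ yields
\[
B = Q X Q = K P \,(P^{-1} A P^{-1})\, P K = K A K .
\]
By uniqueness in the Riccati equation, $K = A^{-1} \sharp B = C$. Thus $f^{-1}(P^{-1}) = C$, so $P^{-1} = f(C)$, and
\[
X = P^{-1} A P^{-1} = f(C) A f(C) = A \hat{\sigma}_f B .
\]

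The step that needs the most care is inverting $f$ in the uniqueness part. This is exactly where strict monotonicity is used: it makes $f^{-1}$ well defined on the spectrum of $P^{-1}$ and guarantees $K \in \mathbb{P}$. Once $K$ is known to be a positive function of $P$, everything commutes and the rest is bookkeeping with the Riccati characterization.
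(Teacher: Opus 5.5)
Your proof is correct, and your uniqueness argument is the same as the paper's: solve $P\sigma_f Q = I$ for $Q = P^{1/2} f^{-1}(P^{-1}) P^{1/2}$, combine the two Riccati expressions for $X$, and use Riccati uniqueness to identify $f^{-1}(P^{-1})$ with $A^{-1}\sharp B$. The paper writes the last relation as $A = K^{-1}BK^{-1}$ where you write $B = KAK$. Your direct check that $f(C)Af(C)$ actually solves the equation, and your justification that $f^{-1}$ is defined on the spectrum of $P^{-1}$, are both missing from the paper's proof, which only shows that any solution must equal $A\hat{\sigma}_f B$. So your version is the more complete one.
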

\begin{proof}
Let $U = A \sharp X^{-1}, \ V = B \sharp X^{-1}$.
Then by assumption, \( U \sigma_f V = I \). By the definition of the Kubo-Ando operator mean \( \sigma_f \), this implies
\[
V  = U^{1/2} f^{-1}(U^{-1}) U^{1/2}.
\]
Now, we express \( X \) in terms of \( U \) and \( A \) by the Riccati equation:
$X = U^{-1} A U^{-1}$, and similarly,
$X = V^{-1} B V^{-1}$.
Thus, equating both expressions for \( X \), we get
\[
U^{-1} A U^{-1} = V^{-1} B V^{-1}.
\]
Substituting the expression for \( V \) in terms of \( U \), we have
\begin{align*}
A &= U V^{-1} B V^{-1} U \\
  &= U \left( U^{1/2} f^{-1}(U^{-1}) U^{1/2} \right)^{-1} B \left( U^{1/2} f^{-1}(U^{-1}) U^{1/2} \right)^{-1} U \\
  &= U^{1/2} \left[ f^{-1}(U^{-1}) \right]^{-1} U^{-1/2} B U^{-1/2} \left[ f^{-1}(U^{-1}) \right]^{-1} U^{1/2} \\
  &= \left[ f^{-1}(U^{-1}) \right]^{-1} B \left[ f^{-1}(U^{-1}) \right]^{-1}.
\end{align*}
This implies $[f^{-1}(U^{-1})]^{-1} = A \sharp B^{-1}$
so $f^{-1}(U^{-1}) = A^{-1} \sharp B$.
Applying \( f \) to both sides, we obtain
\[
U^{-1} = f(A^{-1} \sharp B).
\]
Finally, substituting this back into the expression for \( X \) yields
$$ X = U^{-1} A U^{-1} = f(A^{-1} \sharp B) \, A \, f(A^{-1} \sharp B) = A \hat{\sigma}_f B.$$
This shows the equation \eqref{alteq} has a unique solution.
\end{proof}

Now we consider the following equation  for positive definite matrices to define multi-variable alternative means:
\begin{align} \label{phi_g}
\Phi_g(X)
&= \Lambda_g \!\big( \omega; A_{1} \sharp X^{-1}, A_{2} \sharp X^{-1}, \ldots, A_{m} \sharp X^{-1} \big) - I = 0,
\end{align}
where $g \in \mathcal{L}$. Note that \eqref{phi_g} is a multi-variable extension of \eqref{alteq}, and is equivalent to
\[
 \Lambda_g \!\left(\omega; (X^{1/2} A_{1} X^{1/2})^{1/2}, (X^{1/2} A_{2} X^{1/2})^{1/2}, \ldots, (X^{1/2} A_{m} X^{1/2})^{1/2} \right) = X,
\]
by the congruence invariance of the generalized Karcher mean.
Moreover, from the generalized Karcher equation \eqref{generalized Karcher eq}, it can be expressed equivalently as
\[
\sum_{i=1}^{m} w_i \, g(A_i \sharp X^{-1}) = 0 \qquad \text{for some } g \in \mathcal{L}.
\]
We denote by $\Gamma_g(\omega; \mathbb{A})$ the set of solutions to the equation $\Phi_g(X) = 0$.
By an argument similar to that in Remark~\ref{brouwer}, one can show that $\Gamma_g(\omega; \mathbb{A}) \neq \emptyset$.

\begin{lemma} \label{ineq_1}
Let $\mathbb{A} = (A_1, A_2, \ldots, A_m) \in \mathbb{P}_n^{m}$ and $\omega = (w_{1}, w_{2}, \dots, w_{m}) \in \Delta_{m}$ with $\alpha I \leq A_j \leq \beta I $ for all $j$ and some $\alpha, \beta > 0$. Then $X \in \Gamma_g(\omega; \mathbb{A})$ satisfies
\begin{displaymath}
\alpha I \leq X \leq \beta I.
\end{displaymath}
\end{lemma}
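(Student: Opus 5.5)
I will mirror the proof of Lemma~\ref{ineq}, replacing the Karcher mean $\Lambda$ by the generalized Karcher mean $\Lambda_g$. The plan rests on two standard facts about $\Lambda_g$ from P\'alfia's theory \cite{P}: it is monotone in each variable with respect to the Loewner order, and it is homogeneous (congruence invariant), so $\Lambda_g(\omega; cZ, \dots, cZ) = cZ$ for every $c>0$ and $Z \in \mathbb{P}_n$. Both follow from the generalized Karcher equation: monotonicity is P\'alfia's result, and idempotency holds because $g(1)=0$ makes $X=cZ$ a solution of $\sum_i w_i\, g(X^{-1/2} cZ X^{-1/2}) = 0$, which has a unique solution.

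First, let $X \in \Gamma_g(\omega;\mathbb{A})$. By the congruence-invariant reformulation stated right after \eqref{phi_g},
\[
X = \Lambda_g\!\left(\omega; (X^{1/2}A_1X^{1/2})^{1/2}, \dots, (X^{1/2}A_mX^{1/2})^{1/2}\right).
\]
From $\alpha I \le A_i \le \beta I$ we get $\alpha X \le X^{1/2}A_iX^{1/2} \le \beta X$. Since the square root is operator monotone, this gives
\[
\sqrt{\alpha}\,X^{1/2} \le (X^{1/2}A_iX^{1/2})^{1/2} \le \sqrt{\beta}\,X^{1/2}
\]
for every $i$.

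Second, apply Loewner monotonicity of $\Lambda_g$ together with idempotency. This yields $\sqrt{\alpha}\,X^{1/2} \le X \le \sqrt{\beta}\,X^{1/2}$.

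Third, solve for $X$. Since $X$ commutes with $X^{1/2}$, the inequality reduces to a statement about the eigenvalues $\lambda$ of $X$: each satisfies $\sqrt{\alpha}\sqrt{\lambda} \le \lambda \le \sqrt{\beta}\sqrt{\lambda}$, that is, $\alpha \le \lambda \le \beta$. Hence $\alpha I \le X \le \beta I$.

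The main obstacle is justifying monotonicity of $\Lambda_g$ for general $g\in\mathcal{L}$; I would cite P\'alfia \cite{P} for this rather than reprove it. If a self-contained argument is preferred, there is a direct alternative that avoids monotonicity of $\Lambda_g$. Work with the form $\sum_i w_i\, g(A_i\sharp X^{-1}) = 0$. If $X \le \beta I$ fails, take a unit eigenvector $v$ of $X$ for its largest eigenvalue $\lambda_{\max} > \beta$. Then $A_i \sharp X^{-1} \le \beta I \sharp X^{-1} = \sqrt{\beta}\,X^{-1/2}$, and operator monotonicity of $g$ gives $g(A_i\sharp X^{-1}) \le g(\sqrt{\beta}X^{-1/2})$. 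Pairing with $v$ gives
\[
0 \le \langle g(\sqrt{\beta/\lambda_{\max}})v, v\rangle < 0,
\]
since $g$ is strictly increasing with $g(1)=0$ and $\sqrt{\beta/\lambda_{\max}} < 1$. This is a contradiction, and the lower bound $\alpha I \le X$ follows symmetrically.
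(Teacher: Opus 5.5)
Your main argument is correct and is what the paper intends: the paper omits the proof as analogous to Lemma~\ref{ineq}, which is exactly your use of the fixed-point form, the two-sided bound by $\sqrt{\alpha}\,X^{1/2}$ and $\sqrt{\beta}\,X^{1/2}$, and Loewner monotonicity plus idempotency of $\Lambda_g$. Your alternative eigenvector argument is also valid. It works directly from $\sum_i w_i\, g(A_i\sharp X^{-1})=0$ using only operator monotonicity of $g$ and of $\sharp$, so it has the advantage of not needing P\'alfia's monotonicity theorem for $\Lambda_g$.
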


 \begin{theorem}  \label{T:alt}
Let $\omega = (w_{1}, w_{2}, \dots, w_{m}) \in \Delta_{m}$. Then there exists $\epsilon_{\omega} > 1$ such that for any $\mathbb{A} \in [\epsilon_{\omega}^{-1} I, \epsilon_{\omega} I]^m \subset  \mathbb{P}^{m}_n$, the equation $\Phi_g(X) =0$ has a unique solution in $[\epsilon_{\omega}^{-1} I, \epsilon_{\omega} I] \subset  \mathbb{P}_n$.
\end{theorem}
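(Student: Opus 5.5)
The plan is to follow the argument of Theorem~\ref{T:-spe-epsilon} verbatim, with the Karcher mean replaced by the generalized Karcher mean $\Lambda_g$. I would define
\begin{displaymath}
H_g^{\omega}(\mathbb{A},X) := X - \Lambda_g\bigl(\omega; (X^{1/2}A_1X^{1/2})^{1/2}, \dots, (X^{1/2}A_mX^{1/2})^{1/2}\bigr),
\end{displaymath}
so that $X \in \Gamma_g(\omega;\mathbb{A})$ if and only if $H_g^{\omega}(\mathbb{A},X)=0$. This equivalence holds by the congruence invariance of $\Lambda_g$, as noted after \eqref{phi_g}. The first step is to check that $H_g^{\omega}$ is continuously Fr\'echet differentiable on $\mathbb{P}_n^m\times\mathbb{P}_n$. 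For this I would use that $\Lambda_g$ is the unique zero of the smooth map $(Y,\mathbb{B})\mapsto\sum_i w_i\, g(Y^{-1/2}B_iY^{-1/2})$, with $g$ analytic on $(0,\infty)$ since it is operator monotone. Its derivative in $Y$ is invertible at the solution, a fact I would take from P\'alfia's theory (the generalized Karcher mean is smooth in its arguments). The implicit function theorem then makes $\Lambda_g$ a $C^1$ function of $\mathbb{B}$, and composing with the smooth maps $X\mapsto (X^{1/2}A_iX^{1/2})^{1/2}$ gives that $H_g^{\omega}$ is $C^1$.

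Next I would compute $D_2H_g^{\omega}(\mathbb{I},I)$. Since $\Lambda_g(\omega;Y,\dots,Y)=Y$ by idempotency, we have $H_g^{\omega}(\mathbb{I},X)=X-X^{1/2}$, exactly as in the Karcher case. Hence $D_2H_g^{\omega}(\mathbb{I},I)(Y)=Y-\tfrac12 Y=\tfrac12 Y$, which is invertible on $\mathbb{H}_n$. Notably, the derivative of $\Lambda_g$ itself is never needed at the base point, only idempotency. The implicit function theorem then gives open neighborhoods $V\ni\mathbb{I}$ and $W\ni I$ such that for each $\mathbb{B}\in V$ there is exactly one $X\in W$ with $H_g^{\omega}(\mathbb{B},X)=0$. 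I would make this uniqueness within $W$ explicit, since the local uniqueness is the real content of the theorem.

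Finally I would choose $\epsilon_\omega>1$ with $[\epsilon_\omega^{-1}I,\epsilon_\omega I]^m\subset V$ and $[\epsilon_\omega^{-1}I,\epsilon_\omega I]\subset W$. For $\mathbb{A}$ in this box, every solution $X\in\Gamma_g(\omega;\mathbb{A})$ lies in $[\epsilon_\omega^{-1}I,\epsilon_\omega I]\subset W$ by Lemma~\ref{ineq_1}. Lemma~\ref{ineq_1} itself follows from the monotonicity of $\Lambda_g$ exactly as in Lemma~\ref{ineq}: one sandwiches $(X^{1/2}A_iX^{1/2})^{1/2}$ between $\sqrt{\alpha}X^{1/2}$ and $\sqrt{\beta}X^{1/2}$. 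Any two solutions therefore lie in $W$ and coincide by the local uniqueness. Existence comes from the Brouwer argument of Remark~\ref{brouwer}, or directly from the implicit function $G(\mathbb{A})$.

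The main obstacle I expect is the smoothness of $\Lambda_g$ in the first step, i.e.\ verifying invertibility of the $Y$-derivative of the generalized Karcher equation for general $g\in\mathcal{L}$. If a clean reference is unavailable, I would first handle the base point only. At $Y=I$, $\mathbb{B}=\mathbb{I}$ the derivative is $Z\mapsto -\sum_i w_i\, g'(1) Z=-Z$ because $g'(1)=1$, so it is invertible there. By continuity it stays invertible nearby, which suffices since only a neighborhood of $(\mathbb{I},I)$ is used.
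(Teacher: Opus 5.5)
Your proposal is correct and follows the paper's intended route. The paper omits this proof, saying it is analogous to Lemma~\ref{ineq} and Theorem~\ref{T:-spe-epsilon}: apply the implicit function theorem to $H_g^{\omega}$ at $(\mathbb{I},I)$, where $D_2H_g^{\omega}(\mathbb{I},I)=\tfrac12\,\mathrm{id}$ by idempotency, and then use the a priori bound of Lemma~\ref{ineq_1} to force every solution into the neighborhood $W$. Your extra care fills in details that the sketch of Theorem~\ref{T:-spe-epsilon} leaves implicit: you get $C^1$-smoothness of $\Lambda_g$ near $\mathbb{I}$ by applying the implicit function theorem to the generalized Karcher equation (derivative $-\mathrm{id}$ since $g'(1)=1$, plus global uniqueness of $\Lambda_g$), you use the uniqueness clause of the implicit function theorem within $W$ explicitly, and you keep the neighborhoods $V$ and $W$ separate.
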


A similar proof can be given for Lemma~\ref{ineq_1} and Theorem~\ref{T:alt}, analogous to the arguments used in Lemma~\ref{ineq} and Theorem~\ref{T:-spe-epsilon}. Therefore, we skip the proof.

\textbf{Open problem:}
For any $\omega \in \Delta_m$ and $\mathbb{A} = (A_{1}, \ldots, A_{m}) \in \mathbb{P}_{n}^{m}$, we know that the equation \eqref{phi_g} admits at least one solution. In the particular case $g(x)=x-1$, the solution is unique and coincides with the Wasserstein mean. In contrast, for $g(x)=\log x$, uniqueness generally fails.

This leads to the following question: determine necessary and sufficient conditions on $g \in \mathcal{L}$ under which the equation \eqref{phi_g} admits a unique solution.

%This is equivalent to
%\begin{equation}\label{E:Wass}
%X = \sum_{i=1}^{m} w_{i} (X^{1/2}A_{i}X^{1/2})^{1/2} .
%\end{equation}

\vspace{1cm}

\textbf{Acknowledgement}

The work of S. Kim was supported  by the National Research Foundation of Korea (NRF) grant funded by the Korea government (MSIT) (No. NRF-2022R1A2C4001306).  This work of V. N. Mer  was supported by Basic Science Research Program through the National Research Foundation of Korea (NRF) funded by the Ministry of Education, Korea (No. RS-2024-00462498).

\bibliographystyle{amsplain} % Choose a style for your bibliography
% \bibliography{references} % Specify the name of your .bib file

\end{document}